\theoremstyle{definition}
\newtheorem{definition}{Definition}[section]
\newtheorem{theorem}[definition]{Theorem}
\newtheorem{question}[definition]{Question}
\newtheorem{lemma}[definition]{Lemma}
\DeclareMathOperator{\andd}{\,and\,}
\DeclareMathOperator{\Fn}{Fn}
\DeclareMathOperator{\dom}{dom}
\DeclareMathOperator{\PP}{{\mathbb P}}
\DeclareMathOperator{\PAT}{\mathbf{PAT}}
\DeclareMathSymbol{\mlq}{\mathord}{operators}{``}
\DeclareMathSymbol{\mrq}{\mathord}{operators}{`'}
\date{October 2024}
\begin{document}
\title{On Products of $\Delta$-sets}
\author{Rodrigo Rey Carvalho}
\address{Independent Researcher}
\email{rodrigo.rey.carvalho@gmail.com}

\author{Vinicius de Oliveira Rodrigues}
\address{University of São Paulo}
\email[Corresponding author]{vinior@ime.usp.br}
\begin{abstract}We prove the consistency of the existence of a $Q$-set whose square is not a $\Delta$-set and that if there is a $\Delta$-set, then there exists a $\Delta$-set whose all finite powers are $\Delta$-sets.\vspace{1em}

    \noindent\emph{2020 Mathematics Subject Classification:} 54B10, 54D15, 03E35, 54H05.
    
    \noindent\emph{Keywords:} $\Delta$-sets, $Q$-sets, product spaces, special subsets of the reals.
\end{abstract}

\maketitle

\section{Introduction}\label{sec:intro}

$\Delta$-sets and $Q$-sets are well-studied classes of special uncountable subsets of reals which have strong and interesting properties of category.

The history of $Q$-sets goes back to Hausdorff \cite{HausdorffQ}, Sierpinski \cite{SierpinskiQ} and Rothberger \cite{RothbergerQ}. A $Q$-set is a subspace of the real line - or some other Polish space - whose every subspace is a relative $G_\delta$ (see e.g. \cite{miller1984special}). $Q$-sets are related to the Normal Moore Space Problem for the separable case: there exists an uncountable non-metrizable Moore normal space if, and only if there exists a $Q$-set (see e.g. \cite{hernandez2005q}). The existence of $Q$-sets is independent of ZFC - one readily sees that the existence of a $Q$-set implies $2^{\omega_1}=\mathfrak c$, and every uncountable subset of the reals of size less than $\mathfrak p$ is a $Q$-set \cite{brendle1999dow}.

The study of $\Delta$-sets started with Reed and van Douwen, as outlined in \cite[p. 176]{reed1990set}, and is related to the study of the dividing line between the class of normal spaces and the class of countably paracompact spaces, thus, of Dowker spaces. A vanishing sequence is a countable decreasing sequence of sets with empty intersections. We say that an uncountable subset $X$ of the reals- or some other Polish space - 
is a $\Delta$-set if for every vanishing sequence $(F_n: n \in \omega)$ of subsets of $X$, there exists a vanishing sequence of open sets (equivalently, $G_\delta$ sets) of $X$ $(U_n: n \in \omega)$ such that for every $n \in \omega$, $F_n\subseteq U_n$. We call this an open expansion of the vanishing sequence. Przymusinski has proved that there exists an uncountable non-metrizable Moore normal space if, and only if there exists a $\Delta$-set \cite{przymusinski1977normality}.

%perguntar ao Paul o que escrever

Clearly, every $Q$-set is a $\Delta$-set. In \cite{fakeKnight}, it is claimed that it is consistent that there exists a $\Delta$-set that is not a $Q$-set. However, the proof is considered to be somewhat unclear and alternative proofs are looked for \cite{leiderman2023delta}. It is not known if the existence of $\Delta$-sets implies that $2^{\omega_1}=2^\omega$.

In this paper, we prove that two classical results regarding the productivity of $Q$-sets also hold for $\Delta$-sets. A natural question is whether the finite product of $Q$-sets or $\Delta$-sets is a $Q$-set or $\Delta$-set.

In \cite{fleissner1983squares}, Fleissner claimed to have proven the consistency of the existence of is a $Q$-set whose square is not a $Q$-set. However, Miller observed that Fleissner’s proof is flawed \cite{miller2006hodgepodge}, and Brendle subsequently fixed the proof \cite{BrendleQ}. In Section \ref{sec:forcing}, we modify Brendle's argument to prove the consistency of the existence of a $Q$-set whose square is not a $\Delta$-set.

In \cite{przymusinski1980existence}, Przymusinski proved that the existence of a $Q$-set implies the existence of a $Q$-set whose all finite powers are also $Q$-sets. Such a $Q$-set is called a ``strong $Q$-set''. In Section \ref{sec:strongdelta} we prove that the existence of a $\Delta$-set implies the existence of a \textit{strong} $\Delta$-set, that is, of a $\Delta$-set whose all finite powers are $\Delta$-sets.

\section{A \texorpdfstring{$Q$}{Q}-set whose square is not a \texorpdfstring{$\Delta$}{Delta}-set.}\label{sec:forcing}

In \cite{fleissner1980}, Fleissner and  Miller defined a forcing notion designed to turn a subspace $A$ of some $X\subseteq 2^\omega$ into a $F_\sigma$. It may be defined as follows:

\begin{definition}
Assume $A\subseteq X\subseteq 2^\omega$. We define $P(A, X)$ to be the set of all $q\in [2^{<\omega}\times\{0\}\times \omega \cup A\times\{1\}\times \omega]^{<\omega}$ such that:

\begin{equation*}
    \forall n \in \omega \,\forall a \in A\, \forall s \in 2^{<\omega} \, (s, 0, n) \in r \andd (a, 1, n) \in r \implies s\not \subseteq a \label{eq:condition} \tag{$*$}
\end{equation*}

$P(A, X)$ is ordered by reverse inclusion.
\end{definition}

Intuitively, the poset is trying to define open sets $U_n$ so that $X\setminus A=X\cap\bigcap_{n \in \omega} U_n$.  $(s, 0, n)\in q$ is interpreted as ``$q$ promises that $[s] \subseteq U_n$'', and $(a, 1, n) \in q$ is interpreted as ``$q$ promises that $a\notin U_n$, thus, (\ref{eq:condition}) is required for $r$ to be internally consistent. Informally, if $G$ is $\mathbb P(A, X)$-generic, one defines $U_n=\bigcup\{[s]: (s, 0, n) \in q \in G\}$, and standard density arguments are used to show that $X\setminus A=X\cap\bigcap_{n \in \omega} U_n$.

Thus, by starting with any set $X$ and iterate this forcing notion collecting all the names for subsets of $X$ along the iteration, one may transform any set $X$ in the ground model into a $Q$-set. In \cite{fleissner1980}, Fleissner and Miller used a modification of this technique to construct a $Q$-set that is concentrated in the rationals: he added $\omega_1$ Cohen reals and iterated these forcing notions including the rationals in all sets $A$.

In this section, we modify the construction in \cite{BrendleQ} to prove that it is consistent that there exists a $Q$-set whose square is not a $\Delta$-set. More specifically:

\begin{theorem}Assuming GCH, for every uncountable cardinal  $\kappa$ (in the ground model), there exists a cardinal-preserving forcing notion $\mathbb P$ such that: $$\Vdash_\mathbb P ``\check \kappa\leq \mathfrak c \andd (\exists X\subseteq 2^\omega\, |X|=\check \kappa \andd X \text{ is a }Q\text{-set} \andd X^2 \text{ is not a } \Delta\text{-set})".$$
\end{theorem}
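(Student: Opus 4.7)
The plan is to adapt Brendle's construction in \cite{BrendleQ} so that the stronger negative conclusion ``$X^2$ is not a $\Delta$-set'' is obtained. Starting from a ground model $V$ of GCH and an uncountable cardinal $\kappa$, I would build a finite-support ccc iteration $\langle \mathbb P_\alpha, \dot{\mathbb Q}_\alpha : \alpha < \kappa\rangle$ whose stages alternate, under the guidance of a bookkeeping function, between Cohen forcing (contributing a new real to the putative set $X$) and Fleissner--Miller forcings $P(\dot A, \dot X)$ for names $\dot A$ for subsets of $\dot X$. Under GCH and finite support, this iteration is ccc, cardinal-preserving, of size $\kappa$, and every name for a subset of the eventual $X$ is assigned to some later stage by the bookkeeping.

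Verifying that $X = \{c_\alpha : \alpha < \kappa\}$ is a $Q$-set in $V[G]$ is then routine: given $A \subseteq X$ in the extension, ccc ensures a name for $A$ appears in some $V[G_\gamma]$, and the subsequent $P(A, X)$-step produces a sequence of basic open sets $(U_n)_n$ with $X \setminus A = X \cap \bigcap_n U_n$, so $X \setminus A$ is a relative $G_\delta$ in $X$.

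The main new content is to arrange that $X^2$ is not a $\Delta$-set. I would design a canonical vanishing sequence $(F_n)_{n \in \omega}$ of subsets of $X^2$ whose definition depends on the well-ordering of $X$ induced by the iteration (for instance, a sequence built from pairs $(c_\alpha, c_\beta)$ with $\alpha < \beta$ together with a near-diagonal coding condition), and show that no open expansion of $(F_n)$ vanishes in $V[G]$. The argument splits into two layers: first, a local step showing that neither Cohen forcing nor $P(A, X)$ adds an open expansion of $(F_n)$---conditions in these forcings are finite and impose only bounded constraints on a candidate sequence $(U_n)$, so a density argument produces a pair of later Cohen reals $c_\alpha, c_\beta \in X$ forced into $\bigcap_n U_n$ with $c_\alpha \neq c_\beta$; second, a reflection argument using ccc pulls any $V[G]$-name for an open expansion into an intermediate extension, where the local step applies.

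The main obstacle, and the essential modification of Brendle's proof, is the combinatorial design of $(F_n)$. Brendle's obstruction to $X^2$ being a $Q$-set is a specific subset $A \subseteq X^2$ which fails to be a relative $G_\delta$; since being a $\Delta$-set is strictly weaker than being a $Q$-set, this conclusion is too weak for our purposes, and the non-$G_\delta$ set $A$ does not automatically yield a vanishing sequence with no open expansion. One must craft $(F_n)$ so that (i) $\bigcap_n F_n = \emptyset$, (ii) any open expansion has nonempty intersection, and (iii) property (ii) is preserved by each $P(A, X)$-step in the iteration. Once $(F_n)$ is correctly identified, the preservation and reflection arguments should follow the blueprint of Brendle's ccc iteration proof.
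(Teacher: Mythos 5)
There is a genuine gap, and you have located it yourself: the entire combinatorial content of the theorem is the construction of the vanishing sequence $(F_n)$ with no open expansion, and your proposal leaves this as an unspecified ``near-diagonal coding condition.'' Without it there is no proof. The paper's solution is to make the witness itself generic: each condition carries a third coordinate $f \in \Fn(\kappa^2,\omega)$, so that the generic filter adds a coloring of $\kappa^2$ whose fibers $P_n = f^{-1}(n)$ partition $X^2$ (via $\xi \mapsto x_\xi$); the vanishing sequence is then the tails $F_m = \bigcup_{n \geq m} P_n$, and ``no open expansion of $(F_m)$'' reduces to ``no point-finite open expansion of the partition $(P_n)$.'' This is the analogue, one level up, of Brendle's obstruction, and it is not something you can graft onto a ground-model or diagonally-defined set: the genericity of the coloring is what makes the argument work.

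Your two-layer scheme (a local preservation step plus a ccc reflection) also does not match how the verification actually goes, and as stated the local step is dubious: a purported open expansion $(V_n)$ in the final extension is not ``added by'' any single Cohen or Fleissner--Miller stage, and finiteness of conditions does not by itself bound it. The paper instead argues directly over the whole poset: fix maximal antichains $\mathcal A^n_{s_0,s_1}$ deciding whether $[s_0]\times[s_1] \subseteq V_n$, put everything into a countable elementary submodel $M$, and fix $\alpha,\beta \geq o(M)$. Given any condition $t$ and any $N$, one extracts the finite ``pattern'' of $t$ over $M$, uses elementarity to find a twin condition $t' \in M$ with a copy $\alpha' \in M$ of $\alpha$, colors the fresh pair $(\alpha',\beta)$ with $N$, forces $(x_{\alpha'},x_\beta)$ into a basic box inside $V_N$, and then swaps $\alpha'$ back for $\alpha$ to produce an extension of $t$ forcing $(x_\alpha,x_\beta) \in V_N$. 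Hence $(x_\alpha,x_\beta) \in \bigcap_n V_n$ and the expansion is not point-finite. This isomorphism-of-conditions argument (and the verification that the swapped condition still satisfies the consistency clauses of the Fleissner--Miller coordinates) is the technical heart of the proof and is exactly what your proposal defers. The $Q$-set half of your plan is essentially the paper's and is fine.
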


In some sense, the model we use is the same as the one used in \cite{BrendleQ}: one starts with GCH, then adds $\kappa$ Cohen reals, and then transforms a family of $\kappa$ independently generated Cohen reals into a $Q$-set, whose square is proved to fail to be a $Q$-set in the extension. However, we will give a slightly different description of this model. The techniques we use for the proof are heavily inspired by the ones that appear in \cite{BrendleQ}. The proof that the resulting set is a $Q$-set is virtually the same as Brendle's, but we prove it here as well for the sake of completeness, as our notation is somewhat different than his.

%Throughout this section, we fix an uncountable cardinal $\kappa$ and work in the ground model, where GCH is assumed. 

We start by defining a large poset. The poset that we are really going to use will be a subset of the following:

\begin{definition}$\mathbb Q$ is the poset of all triples $(f, p, q)$ such that:

\begin{enumerate}[label=Q\arabic*)]
    \item $f\in \Fn(\kappa^2, \omega)$,
    \item $p\in \Fn(\kappa, 2^{<\omega})$,
    \item $q \in \Fn(\kappa^+, [2^{<\omega}\times\{0\}\times \omega \cup \kappa\times\{1\}\times \omega]^{<\omega})$ is such that:

\begin{enumerate}
    \item  $\forall \gamma \in \kappa^+\, \forall \xi \in \kappa\, \forall n \in \omega\, (\xi, 1, n) \in q(\gamma)\rightarrow \xi\in \dom p$,
    \item  $\forall \gamma \in \kappa^+\, \forall \xi \in \kappa\, \forall n \in \omega\, \forall s \in 2^{<\omega} (s, 0, n) \in q(\gamma) \andd (\xi, 1, n) \in q(\gamma)\rightarrow  p(\xi)\perp s$.
\end{enumerate}
\end{enumerate}

We define $(f, p, q)\leq (f', p', q')$ iff:
\begin{enumerate}[label=O\arabic*)]
    \item $f'\subseteq f$,
    \item $\dom p'\subseteq \dom p$ and for all $\xi \in \dom p'$, $p'(\xi)\subseteq p(\xi)$, and
    \item $\dom q'\subseteq \dom q$ and for all $\gamma\in \dom q'$, $q'(\gamma)\subseteq q(\gamma)$.
\end{enumerate}
\end{definition}

The coordinates $f$ and $p$ are encoding $\omega_1$ Cohen reals. Intuitively, if $G$ is $\mathbb Q$-generic:

\begin{itemize}
    \item For each $\xi<\kappa$, $x_\xi=\bigcup\{p(\xi): (f, p, q) \in t\in G\} \in 2^\omega$. Let $X=\{x_\xi: \xi<\kappa\}$.
    \item For each $n \in \omega$, $P_n=\{(x_{\rho_0} x_{\rho_1}): \exists \, (f, p, q) \in r\in G\, \rho_0, \rho_1 \in \dom f \andd f(\rho_0, \rho_1)=n\}$
    \item For each $n \in \omega$ and $\gamma<\kappa^+$, $U_n^\gamma=\bigcup\{[s]:\exists (f, p, q) \in G\, (s, 0, n) \in r(\gamma)\}$.
\end{itemize}

Condition $(a)$ is there to allow us to write condition $(b)$ and to ease the notation at some points of the proof. Condition $(b)$ is for internal consistency: this time, $(\xi, 1, n) \in r(\gamma)$ may be interpreted as ``$(f, p, q)$ promises that $x_\xi \notin U^\gamma_n$".

Here we note that $(P_n: n \in \omega)$ will be a partition of $X^2$ without point-finite open expansion. This is equivalent to saying that $X^2$ is not a $\Delta$-set since, for all $m \in \omega$, $\bigcup_{n \geq m}P_n$ compose the vanishing sequence without open expansion. $X$ will be our $Q$-set: we must guarantee that every subset of $X$ will be of the form $\bigcap_{n \in \omega} U^\gamma_n$ for some $\gamma<\kappa^+$, which does not work right now. Thus, we need to refine our order $\mathbb Q$ to a smaller order $\mathbb P$, which will be achieved below in Definition \ref{def:P}.

For the remaining of this section, we fix a bookkeeping function on $\kappa^+$, that is, a function $g$ of $\kappa^+$ onto $\kappa^+\times \kappa^+$ such that for every $\gamma<\kappa^+$, if $g(\gamma)=(\zeta, \xi)$ then $\zeta\leq \xi$. We denote the coordinates of $g$ as $g_0, g_1$. Thus, for every $\gamma<\kappa^+$, $g_0(\gamma)\leq \gamma$.

\begin{definition}\label{def:P}We recursively define subposets $\mathbb P_\gamma$ of $\mathbb Q$ for $\gamma\leq\kappa^+$ along with families $(\dot A^\gamma_\eta: \eta<\kappa^+)$ for $\gamma<\kappa^+$ such that, for all $\gamma\leq \kappa^+$:

\begin{enumerate}[label=\roman*)]
    \item $\mathbb P_0=\{(f, p, q) \in \mathbb Q: q=\emptyset\}$.
    \item If $\zeta<\gamma$, then $\mathbb P_\zeta\subseteq \mathbb P_\gamma$ and for all $(f, p, q) \in \mathbb P_\gamma$, $(f, p, q|\zeta) \in \mathbb P_\zeta$.
    \item $\mathbb P_\gamma=\bigcup_{\zeta<\gamma} \mathbb P_\zeta$ whenever $\gamma$ is limit.
    \item $\PP_\gamma$ has the Knaster property.
    \item $(\dot A_\eta^\gamma: \eta<\kappa^+)$ is the family of all $\mathbb P_\gamma$-nice names for a subset of $\check \kappa$.
    \item If $\gamma<\kappa^+$, $\mathbb P_{\gamma+1}$ is:\begin{equation*}\mathbb P_\gamma \cup \left\{(f, p, q|\gamma) \in \mathbb Q: (f, p, q)\in \mathbb P_\gamma,\,\gamma \in \dom q \andd  \forall (\xi, 1, n) \in  q(\gamma)\, (f, p, q)\Vdash_{\gamma} \check \xi \notin \dot A^{g_0(\gamma)}_{g_1(\gamma)}\right\}.\end{equation*}
\end{enumerate}

We define $\mathbb P=\mathbb P_{\kappa^+}$ and $\dot A_\gamma=\dot A^{g_0(\gamma)}_{g_1(\gamma)}$ for each $\gamma<\kappa^+$. We also write $\Vdash_{\gamma}$ instead of $\Vdash_{\gamma}$, and $\Vdash$ instead of $\Vdash_{\kappa^+}$.
\end{definition}

We show that such a construction is possible.

\begin{proof}[Construction]

We proceed by recursion in $\gamma<\kappa^+$.

In all cases, if $\gamma<\kappa^+$, after constructing $\mathbb P_\gamma$ we define $(\dot A^\gamma_\eta: \eta<\kappa^+)$ as in vi). To be able to do that, we must show that only $\kappa^+$ such nice names exist. By iii), it follows that:

$$|\mathbb P_\gamma|\leq |\{(f, p, q) \in \mathbb Q: \dom q\subseteq \gamma\}|$$$$\leq |\Fn(\kappa^2, \omega)|.|\Fn(\kappa, 2^{<{\omega}})|.|\Fn(\gamma, [2^{<\omega}\times \{0\}\times \omega \cup \kappa\times\{1\}\times \omega]^{<\omega})|=\kappa.\kappa.\kappa=\kappa.$$

Thus, for all $\gamma<\kappa^+$, $|\mathbb P_\gamma|=\kappa$. Now, by the Knaster Property, $\mathbb P_\gamma$ has the countable chain condition, so, by GCH, the number of $\mathbb P_\gamma$-nice names for subsets of $\check \kappa$ is less or equal to:

$$|([\mathbb P_\gamma]^\omega)^\kappa|\leq \kappa^\kappa=\kappa^+.$$

Now we proceed to show how to construct the sets $\mathbb P_\gamma$ in each step having defined the previous ones.

\textbf{Step 0:} We define $\mathbb P_0$ as in i). Condition v) holds by a standard $\Delta$-system argument. In fact, $\mathbb P_0$ is equivalent to adding $\kappa$ Cohen reals. We prove v) anyway for the sake of completeness:

Let $((f_\mu, p_\mu, \emptyset): \mu<\omega_1)$ be a family of elements of $\mathbb P_0$. By the $\Delta$-system Lemma applied twice, there exists $I\subseteq \omega_1$ with $|I|=\omega_1$ and a finite sets $R$, $S$ such that for all distinct $\mu, \mu' \in I$, $\dom f_\mu\cap \dom f_{\mu'}=R$ and $\dom p_\mu\cap \dom p_{\mu'}=S$. Since $\omega^R$ and $(2^{<\omega})^S$ are both countable, there exists $J\subseteq I$ with $|J|=\omega_1$ such that for all $\mu, \mu' \in J$, $f_\mu|R=f_{\mu'}|R$ and $p_\mu|S=p_{\mu'}|S$. Then, clearly, if $\mu, \mu' \in J$, $(f_\mu\cup f_{\mu'}, p_\mu\cup p_{\mu'}, \emptyset)\in \mathbb P_0$ extends both $(f_\mu, p_\mu, \emptyset)$ and $(f_{\mu'}, p_{\mu'}, \emptyset)$.

\textbf{Limit step $\gamma$}: Define $\mathbb P_\gamma$ as in iv). ii) is trivial, and by the inductive hypothesis, iii) is clear. We must prove v).

Let $((f_\mu, p_\mu, q_\mu): \mu \in \omega_1)$ be a family in $\mathbb P_\gamma$. By the $\Delta$-system Lemma, there exists $I \in [\omega_1]^{\omega_1}$ and a finite $R\subseteq \gamma$ such that for every distinct $\mu, \mu' \in  I$, $\dom q_\mu \cap \dom q_{\mu'}=R$. Let $\eta<\gamma$ be such that $R\subseteq \eta$. Since $\mathbb P_\eta$ has the Knaster property, there exists $J\in [I]^{\omega_1}$ such that for every $\mu, \nu \in I$, $(f_\mu, p_\mu, q_\mu|\eta)$ and $(f_\nu, p_\nu, q_\nu|\eta)$ are compatible. We claim that for such $\nu, \mu$, $(f_\mu, p_\mu, q_\mu)$ and $(f_\nu, p_\nu, q_\nu)$ are compatible as well.

Let $(f, p, q)\leq (f_\mu, p_\mu, q_\mu|\eta), (f_\nu, p_\nu, q_\nu|\eta)$ be a common extension in $\mathbb P_\eta$. Let $T=\dom q_\mu \cup \dom q_{\mu}\setminus \eta$. If $T=\emptyset$, we are done.
If not, let $T=\{\zeta_0, \dots, \zeta_k\}$ for some $k\in \omega$, written in increasing order. For each $i\leq k$, notice that $\xi_i \notin R=\dom q_\mu\cap \dom q_\nu$ and let $\bar q=q\cup\{(\zeta_j, q_\mu(j)): j\leq k, \zeta_j \in \dom q_\mu\}\cup\{(\zeta_j, q_\nu(j)): j\leq k, \zeta_j \in \dom q_\nu\}$.
Then, inductively, one easily verifies that $(f, p, \bar q|{\zeta_i+1})\leq (f_\mu, p_\mu, q_\mu|{\zeta_i+1}), (f_\nu, p_\nu, q_\nu|{\zeta_i+1})$. By putting $i=k$, we are done.

\textbf{Successor step $\gamma+1$:} Define $\mathbb P_{\gamma+1}$ as in vi). By inductive hypothesis, ii) is clear. We have to prove iv).

Let $((f_\mu, p_\mu, q_\mu): \mu \in \omega_1)$ be a family in $\mathbb P_{\gamma+1}$. If uncountably many elements of the family are in $\mathbb P_\gamma$, we are done. Thus, we may assume that no element of the family is in $\mathbb P_\gamma$. 

By the inductive hypothesis, there exists an uncountable $I\subseteq \omega_1$ such that all the elements of $((f_\mu, p_\mu, q_\mu|\gamma): \mu \in I)$ are compatible in $\mathbb P_\gamma$. Since $[2^{<\omega}\times \{0\}\times \omega]^{<\omega}$ is countable, there exists an uncountable $J\subseteq I$  and a set $S$ such that for all $\mu\in J$, $q_\mu\cap (2^{<\omega}\times \{0\}\times \omega)=S$. We claim that for all $\mu, \nu \in J$, $(f_\mu, p_\mu, q_\mu)$  and $(f_\nu, p_\nu, q_\nu)$ are compatible.

Let $(f, p, q)$ extend both $(f_\mu, p_\mu, q_\mu|\gamma)$ and $(f_\nu, p_\nu, q_\nu|\gamma)$ in $\mathbb P_\gamma$. Let $\bar q=q\cup\{(\gamma, q_\nu(\gamma)\cup q_\mu(\gamma))\}$. Then clearly, if $(f, p, \bar q) \in \mathbb P_\gamma$, it is a common extension of both $(f_\mu, p_\mu, q_\mu)$ and $(f_\nu, p_\nu, q_\nu)$.

$(f, p, \bar q) \in \mathbb Q:$ we have to show that if $(s, 0, n), (\xi, 1, n) \in \bar q(\gamma)$, then $\xi \in \dom p$ and $p(\gamma)\perp s$. Without loss of generality $(\xi,1, n)\in q_\mu(\gamma)$, so as $(s, 0, n)\in S\subseteq  q_\mu(\gamma)$ as well, we have $\xi \in \dom p_\mu$ and $s\perp p_\mu(\xi)$. Thus, $\xi \in \dom p$ and, as $p(\xi)\supseteq p_\mu(\xi)$.

$(f, p, \bar q) \in \mathbb P_{\gamma+1}:$ We have to show that if $(\xi, 1, n) \in \bar q(\gamma)$, then $(f, p, q)\Vdash_{\gamma} \check \xi \notin \dot A^{g_0(\gamma)}_{g_1(\gamma)}$. Without loss of generality, we have $(\xi, 1, n) \in q_\mu(\gamma)$, so $(f_\mu, p_\mu, q_\mu|\gamma)\Vdash_{\gamma} \check \xi \notin \dot A^{g_0(\gamma)}_{g_1(\gamma)}$. As $(f, p, q)\leq (f_\mu, p_\mu, q_{\mu}|\gamma)$, we are done.

\end{proof}

\begin{lemma}\label{lemma:iteration}
    For every  $\zeta<\gamma\leq \kappa^+$, then $\mathbb P_\zeta\subseteq^c \mathbb P_\gamma$ and for all $(f, p, q) \in \mathbb P_\gamma$ and for all $\zeta<\gamma$, $(f, p, q|\zeta)\in \mathbb P_\zeta$ is a reduction of $(f, p, q)$. More specifically, for every $(f', p', q') \in \mathbb P_\zeta$ below $(f, p, q|\zeta)$, the triple $(f', p', q'\cup q|{[\zeta, \gamma)]})$ is a common extension of $(f', p', q')$ and $(f, p, q)$ in $\mathbb P_\gamma$.
\end{lemma}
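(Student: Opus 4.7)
The plan is to argue by induction on $\gamma \leq \kappa^+$. That $(f,p,q|\zeta) \in \mathbb P_\zeta$ is already guaranteed by clause (ii) of Definition \ref{def:P}. The real content is that the explicit triple $\bar r := (f', p', q' \cup q|{[\zeta,\gamma)})$ belongs to $\mathbb P_\gamma$; once this is known, the fact that $\bar r$ extends both $(f',p',q')$ and $(f,p,q)$ is immediate from the definition of $\leq$, and completeness of the embedding $\mathbb P_\zeta \subseteq^c \mathbb P_\gamma$ follows formally: incompatibility of two elements of $\mathbb P_\zeta$ is preserved upward because any common extension in $\mathbb P_\gamma$ can be truncated via (ii) to give a common extension in $\mathbb P_\zeta$, and any maximal antichain of $\mathbb P_\zeta$ remains maximal in $\mathbb P_\gamma$ because $\bar r$ exhibits $(f,p,q|\zeta)$ as a reduction of $(f,p,q)$.

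The induction splits into the usual cases. For limit $\gamma$, the finite support of $q$ yields some $\eta<\gamma$ with $q|{[\zeta,\gamma)} = q|{[\zeta,\eta)}$, and the inductive hypothesis places $\bar r \in \mathbb P_\eta \subseteq \mathbb P_\gamma$. For a successor $\gamma = \eta+1$, if $\eta \notin \dom q$ we again reduce directly to $\mathbb P_\eta$; otherwise, set $\bar r' := (f',p', q' \cup q|{[\zeta,\eta)})$. By the inductive hypothesis $\bar r' \in \mathbb P_\eta$ with $\bar r' \leq (f,p,q|\eta)$, and then $\bar r = \bar r' \cup \{(\eta,q(\eta))\}$, which reduces the problem to a single new coordinate.

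The main obstacle is checking $\bar r \in \mathbb P_{\eta+1}$ in this last case. The condition $\bar r \in \mathbb Q$ at coordinate $\eta$ is routine: for any $(\xi,1,n), (s,0,n) \in q(\eta)$, membership of $(f,p,q)$ in $\mathbb Q$ gives $\xi \in \dom p$ and $p(\xi) \perp s$, and these persist since $\dom p \subseteq \dom p'$ and $p(\xi) \subseteq p'(\xi)$. The substantial requirement is the forcing clause (vi): for each $(\xi,1,n) \in q(\eta)$ we need $\bar r' \Vdash_\eta \check\xi \notin \dot A^{g_0(\eta)}_{g_1(\eta)}$. But $(f,p,q)\in \mathbb P_{\eta+1}$ gives exactly $(f,p,q|\eta) \Vdash_\eta \check\xi \notin \dot A^{g_0(\eta)}_{g_1(\eta)}$, and since $\bar r' \leq (f,p,q|\eta)$ in $\mathbb P_\eta$, forcing is preserved downward, completing the step.
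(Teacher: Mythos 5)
Your proposal is correct and follows essentially the same route as the paper: induction on $\gamma$, with the limit case absorbed into some earlier stage $\mathbb P_\eta$ using the finite support of $q$, and the successor case reduced to verifying the $\mathbb Q$-clauses and the forcing requirement (vi) at the single new coordinate, the latter holding because $\bar r' \leq (f,p,q|\eta)$ and stronger conditions force the same statements. Your explicit verification of complete embedding via truncation and reductions is also how the paper argues it.
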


\begin{proof}
We prove this by induction in $\gamma$. If $\gamma=0$, there is nothing to prove.

First, assume $\gamma$ is limit.

Fix $\zeta<\gamma$. Clearly, $\mathbb P_\zeta\subseteq \mathbb P_\gamma$. We show that two elements $(f, p, q), (f', p', q') \in \mathbb P_\zeta$ that are compatible in $\mathbb P_\gamma$, are compatible in $\mathbb P_\zeta$: such a common extension $t''=(f'', p'', q'')$ 
would be in some $\mathbb P_{\eta}$ for some $\eta<\gamma$.
If $\eta\leq \zeta$ we are done as $\PP_\eta\subseteq \PP_\zeta$, and if $\eta>\zeta$, we are done as well, since $(f'', p'', q''|\gamma)$ clearly extends $(f, p, q)$ and $(f', p', q')$.

Now let $(f, p, q) \in \mathbb P_\gamma$. We must show that $(f, p, q|\zeta)\in \mathbb P_\gamma$ is a reduction of $(f, p, q)$. Assume $(f', p', q')\in \mathbb P_\zeta$ extends $(f, p, q|\zeta)$. There exists $\eta$ such that $\zeta<\eta<\gamma$ and $(f, p, q)\in \mathbb P_\eta$. As $(f, p, q|\zeta)$ is a reduction of $(f, p, q)\in \mathbb P_\zeta$ to $\mathbb P_\zeta$, we have $(f', p', q'\cup q|{[\zeta, \eta)})=(f', p', q'\cup q|{[\zeta, \gamma)})$ extends both $(f, p, q)$ and $(f', p', q')$ in $\mathbb P_\eta$, and therefore in $\mathbb P_\gamma$.

Now we prove the step $\gamma+1$. Let $\zeta\leq \gamma$ be given. If $(f, p, q)$, $(f', p', q') \in \mathbb P_\zeta$ are compatible in $\mathbb P_{\gamma+1}$, by letting $(f'', p'', q'')\in \mathbb P_{\gamma+1}$ be such a common extension, it is clear that $(f'', p'', q''|\zeta)\in \mathbb P_\zeta$ is still a common extension.

Now let $(f, p, q) \in \mathbb P_{\gamma+1}$. We must show that $(f, p, q|\zeta)\in \mathbb P_\zeta$ is a reduction of $(f, p, q)$ to $\mathbb P_\zeta$. We know it is a reduction of $(f, p, q|\gamma)$. Assume $(f', p', q')\in \mathbb P_\zeta$ extends $(f, p, q|\zeta)$. Then $t=(f', p', q'\cup q|{[\zeta, \gamma)})$ extends both $(f', p', q')$ and $(f, p, q|\gamma)$. If $q=q|\gamma$ we are done. If not, as $t\leq (f, p, q|\gamma)$, it follows that $(f', p', q'\cup q|{[\zeta, \gamma]})=(f', p', (q'\cup q|{[\zeta, \gamma)})\cup\{(\gamma, q(\gamma))\})\in \mathbb P_\gamma$ extends both $(f', p', q')$ and $(f, p, q)$.

\end{proof}

\begin{lemma}\label{lemma:augmentation}For every $(f, p, q) \in \mathbb P$:

\begin{enumerate}[label=\alph*)]
    \item If $\gamma \in \kappa^+\setminus \dom q$, let $\bar q=q\cup\{(\gamma, \emptyset)\}$. Then $(f, p, \bar q)\in \mathbb P$ and $(f, p, \bar q)\leq (f, p, q)$.

    \item If $\xi \in \kappa \setminus \dom p$ let $\bar p=p\cup\{(\xi, \emptyset)\}$. Then $(f, \bar p,  q)\in \mathbb P$ and $(f, \bar p,  q)\leq (f, p, q)$.

    \item If $\xi \in \dom p$, $t \in 2^{<\omega}$ and $p(\xi)\subseteq t$, let $\bar p=p|({\dom p\setminus \{\xi\}}\cup\{(\xi, t)\})$. Then $(f, \bar p, q)\in \mathbb P$ and $(f, \bar p, q)\leq (f, p, q)$.
    
    \item If $(\rho_0, \rho_1)\in \kappa^{2}\setminus  \dom f$ and $n \in \omega$, then $(\bar f, p, q) \in \mathbb P_\gamma$ extends $(f, p, q)$, where $\bar f=f\cup\{((\rho_0, \rho_{1}), n)\}$.

\end{enumerate}
\end{lemma}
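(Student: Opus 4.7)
The plan is to split the lemma by which coordinate of the triple is being modified. Items (b), (c), (d) leave $\dom q$ unchanged, so for them the proof reduces to checking (i) that the new triple still lies in $\mathbb{Q}$ and (ii) that the successor-step forcing requirements in the recursive definition of $\mathbb{P}_{\eta+1}$ still hold at every $\eta \in \dom q$. In (b), the new index $\xi$ added to $\dom p$ does not appear in any $q(\gamma)$, so Q3(a) and Q3(b) are unaffected. In (c), replacing $p(\xi)$ by an extension $t$ preserves Q3(b), since $s \perp p(\xi)$ together with $p(\xi) \subseteq t$ forces $s \perp t$. In (d), the modification is only to $f$, which is unconstrained by Q3. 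In each case the new triple strengthens $(f, p, q)$ in $\mathbb{Q}$, so monotonicity of $\Vdash_\eta$ preserves every forcing requirement in the definition of $\mathbb{P}_{\eta+1}$, and the modified triple sits in the same $\mathbb{P}_\delta$ as the original.

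For (a), I would induct on the elements of $\dom q \cup \{\gamma\}$ in increasing order to show that $(f, p, q \cup \{(\gamma, \emptyset)\}) \in \mathbb{P}_{\mu+1}$, where $\mu = \max(\dom q \cup \{\gamma\})$. At the step corresponding to $\gamma$, the new coordinate has value $\emptyset$, so the successor-step forcing condition is vacuous. At a step corresponding to an element $\eta \in \dom q$, the original requirement $(f, p, q|\eta) \Vdash_\eta \check \xi \notin \dot A^{g_0(\eta)}_{g_1(\eta)}$ for each $(\xi, 1, n) \in q(\eta)$ transfers to $(f, p, (q \cup \{(\gamma, \emptyset)\})|\eta)$ by monotonicity of forcing, since the latter triple is equal to the former when $\gamma > \eta$ and a proper strengthening of it when $\gamma < \eta$.

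The main (minor) obstacle is part (a), because the insertion of $\gamma$ into $\dom q$ may occur below existing indices of $\dom q$; the induction above handles this uniformly because the inserted entry is empty, and hence contributes no new forcing obligation of its own, while every prior obligation persists under the strengthening involved in inserting $\gamma$.
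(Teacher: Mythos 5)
Your argument is correct, and it runs on the same mechanism as the paper's proof: the modified triple strengthens the original in $\mathbb Q$, the newly inserted entries (the empty values in (a) and (b), the new $f$-value in (d)) impose no obligations of their own, extending $p(\xi)$ preserves incompatibility in (c), and the forcing obligations at each $\eta \in \dom q$ transfer to the restricted modified condition by monotonicity of $\Vdash_\eta$. The only difference is one of packaging: rather than redoing the finite induction over $\dom q$, the paper observes that the modified condition restricted to a low stage ($0$ for (b)--(d), $\gamma+1$ for (a)) is an element of that stage lying below the corresponding restriction of the original, and then cites Lemma \ref{lemma:iteration}, whose reduction clause already states that attaching the tail $q|[\zeta, \kappa^+)$ to such an extension produces a condition of $\mathbb P$ below the original; your hand-rolled induction is essentially an inlined copy of the induction carried out in the proof of that lemma.
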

    \begin{proof}
        Clearly, all the elements defined in this lemma are in $\mathbb Q$, and, and all inequalities are valid there. Thus, we just need to check that all these elements are in $\mathbb P$.
        
        a) Observe that $(f, p, \bar q)=(f, p,  q)\in \mathbb P_\gamma$ and that $\bar q(\gamma)$ is empty, so $(f, p, \bar q|\gamma+1)\in \mathbb P_{\gamma+1}$.
        As $(f, p, \bar q|\gamma+1)\leq (f, p, q|(\gamma+1))$, it follows from Lemma \ref{lemma:iteration} that $(f, p, \bar q|(\gamma+1)\cup q|{(\gamma+1, \kappa^+)})=(f, p, \bar q)\in \mathbb P$ extends $(f, p, q)$.
        
        b) Clearly $(f, \bar p, q|0)\in \mathbb P_0$ is below $(f, p, q|0)$, so it follows from Lemma \ref{lemma:iteration} that $(f, \bar p, q)\in \mathbb P$ and is below $(f, p, q)$

        c) and d) are similar to b).

    \end{proof}
\begin{lemma}
          For each $\gamma <\kappa^{+}$, $\xi\in \kappa$ and $n \in \omega$ the set $D^{\gamma}_{n, \xi} = \{(f,p,q) \in \PP : \gamma \in \dom q,  \xi \in \dom p \andd  (p(\xi), 0, n) \in q(\gamma) \vee (\xi, 1, n) \in q(\gamma))\}$ is dense.
\end{lemma}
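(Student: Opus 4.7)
The plan is to start from an arbitrary $(f,p,q)\in\mathbb P$ and construct an extension in $D^\gamma_{n,\xi}$. By parts (a) and (b) of Lemma~\ref{lemma:augmentation}, I may assume $\gamma\in\dom q$ and $\xi\in\dom p$; and if $(\xi,1,n)\in q(\gamma)$ we are done, so assume otherwise. Denote $S=\{s:(s,0,n)\in q(\gamma)\}$ and $E=\{\xi':(\xi',1,n)\in q(\gamma)\}$; both are finite, and $\xi\notin E$.

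The first case split is combinatorial. If some $s\in S$ is comparable with $p(\xi)$, I resolve matters without invoking any forcing argument: when $p(\xi)\subseteq s$, I lengthen $p(\xi)$ to $s$ via Lemma~\ref{lemma:augmentation}(c), after which $(p(\xi),0,n)=(s,0,n)$ already lies in $q(\gamma)$; when $s\subseteq p(\xi)$, I append $(p(\xi),0,n)$ to $q(\gamma)$, with internal consistency coming from $s\perp p(\xi')$ for each $\xi'\in E$ (part of $(f,p,q)\in\mathbb Q$) together with the fact that $p(\xi)\supseteq s$ preserves this perpendicularity.

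Otherwise every $s\in S$ is perpendicular to $p(\xi)$, and I split on the forcing relation at stage $\gamma$: does $(f,p,q|\gamma)$ have an extension in $\mathbb P_\gamma$ forcing $\check\xi\notin\dot A^{g_0(\gamma)}_{g_1(\gamma)}$? If yes, pick such $(f',p',q')$; since $p'(\xi)\supseteq p(\xi)$ inherits perpendicularity to every element of $S$, I append $(\xi,1,n)$ to $q(\gamma)$, with the forcing assumption supplying the required stage-$\gamma{+}1$ justification. If no, then density of deciding conditions supplies $(f',p',q')\leq(f,p,q|\gamma)$ forcing $\check\xi\in\dot A^{g_0(\gamma)}_{g_1(\gamma)}$; combined with $(f,p,q|\gamma)\Vdash_\gamma\check\xi'\notin\dot A^{g_0(\gamma)}_{g_1(\gamma)}$ for each $\xi'\in E$ (inherited from $(f,p,q)\in\mathbb P$), we obtain $\xi\ne\xi'$ for every $\xi'\in E$. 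I then further extend inside $\mathbb P_\gamma$ to $(f',p'',q')$ with $p''(\xi)\perp p''(\xi')$ for each $\xi'\in E$, processing the finite set $E$ one element at a time and appending a single distinguishing bit wherever $p'(\xi)$ and $p'(\xi')$ remain comparable; earlier resolutions persist because extensions preserve incompatibility. Appending $(p''(\xi),0,n)$ to $q(\gamma)$ concludes this branch.

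The main subtlety is verifying that the glued triple actually lies in $\mathbb P_{\kappa^+}$, not merely in $\mathbb P_{\gamma+1}$. At stages $\gamma^*>\gamma$ the tail $q|(\gamma,\kappa^+)$ is unchanged, so each promise $(\xi^*,1,n^*)\in q(\gamma^*)$ still needs to be justified, now under the stronger condition at $\gamma^*$; this follows from the original justification together with preservation of forcing under extension. Lemma~\ref{lemma:iteration} is invoked to assemble the gluings of the modified stage-$\gamma$ condition with the untouched tail, and the internal consistency conditions of $\mathbb Q$ at other stages carry over because extending string values only strengthens incompatibilities.
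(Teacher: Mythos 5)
Your argument is correct in substance, but it is considerably more elaborate than the paper's, and the comparison is instructive. The paper handles everything in a single uniform step with no case analysis and no forcing argument: letting $F=\{\mu: (\mu,1,n)\in q(\gamma)\}$, it extends every $p$-value to a common length $k+1$, using the final bit as a marker that distinguishes $\xi$ from all other coordinates, so that $\bar p(\xi)\perp\bar p(\mu)$ for every $\mu\in F$; it then adds $(\bar p(\xi),0,n)$ to $q(\gamma)$ and is done. The point you seem to have missed is that condition (Q3b) only constrains pairs consisting of one triple of the form $(s,0,n)$ and one of the form $(\mu,1,n)$ --- two triples of the first form never conflict with each other --- so there is no need for your initial case split on whether some $s$ with $(s,0,n)\in q(\gamma)$ is comparable with $p(\xi)$; likewise, adding a triple $(s,0,n)$ imposes no requirement from clause (vi) of Definition \ref{def:P}, so the forcing dichotomy in your second case is unnecessary (your sub-case that adds $(\xi,1,n)$ essentially reproves the density of $E^\gamma_\xi$, which the paper keeps as a separate lemma, and the elementarity-style deduction that $\xi\notin E$ is redundant, since $\xi\notin E$ is immediate from the assumption $(\xi,1,n)\notin q(\gamma)$). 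What your route buys is nothing beyond the paper's; what it costs is several extra cases and one slip: in your final sub-case, ``appending a single distinguishing bit'' to $p(\xi)$ does not separate it from $p(\xi')$ when the two strings are \emph{equal} (the extension still end-extends $p(\xi')$); one must then extend both coordinates with differing bits, which is exactly what the paper's uniform marker-bit construction accomplishes. With that repair your proof goes through.
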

         \begin{proof}
          Let $(f,p,q) \in \PP$. By the previous lemma, without loss of generality we may assume that $\gamma \in \dom q$ and that $\xi \in\dom p$. Assume $(\xi, 1, n) \not\in q(\gamma)$.
          
          Let $F=\{\mu\in \kappa^m: (\mu, 1, n)\in q(\gamma)\}$. By the definition of $\mathbb Q$, $F\subseteq \dom p$. Let $k=\max\{|p(\mu)|: \mu\in \dom p\}$.
          
          By item c) of the previous lemma applied finitely many times, it follows that $(f, \bar p, q)\in \mathbb P$ is below $(f, p, q)$, where $\bar p: \dom p\rightarrow 2^{k+1}$ is such that:

          \begin{equation}
              \bar p(\mu)(i)=\begin{cases}
                  p(\mu)(i), & \text{if } i<|p(\mu)|,\\
                0 & \text{if } |p(\mu)|\leq i<k,\\
                0, & \text{if } i=k \andd \mu\neq \xi,\\
                1, & \text{if } i=k \andd \mu= \xi.\\
               
              \end{cases}
          \end{equation}
             The first line guarantees that $\bar p$ is an extension of $p$. The second line defines an irrelevant value $0$ that could be defined as anything. The last lines serve as a marker for $\xi$.

        If $\mu\in F$, then $\mu\neq\xi$. Thus, $\bar p(\mu)(k+1)\neq \bar p(\xi)(k+1)$. Observe that $\forall \mu \in F\,  \bar p(\mu)\perp  \bar p(\xi)$.

          Now define $\bar q$ with the same domain as $q$, equal to $q$ outside of $\gamma$, and such that $\bar q(\gamma)=q(\gamma)\cup\{(\bar p(\xi), 0, n)\}$. Note that $(f, \bar p, \bar q)\in \mathbb Q$ is below $(f, p, q)$ and that $(f, \bar p, \bar q|\gamma)=(f, \bar p, q|\gamma)\in \mathbb P_\gamma$ is below $(f, p, q|\gamma)$ by Lemma \ref{lemma:augmentation} (c). $(f, \bar p, \bar q|(\gamma+1)) \in \mathbb P_{\gamma+1}$ is in $\mathbb Q$, and it easily follows it is also in $\mathbb P_{\gamma+1}$ and that it extends $(f, p, q|(\gamma+1))$, so $(f, \bar p, \bar q)\in \mathbb P$ extends $(f, p, q)$ by Lemma \ref{lemma:iteration}. This condition is clearly in $D^\gamma_{\xi, n}$, as intended.
    \end{proof}

Recall that if $\mathbb S$ is a forcing poset and $\mathbb S_0\subseteq \mathbb S$ is a subposet containing the top element, we say that $\mathbb S_0$ is completely contained in $\mathbb S$ ($\mathbb S_0\subseteq^c\mathbb S$) if the inclusion is a complete embedding. Also, recall that in this notation, if $s\in \mathbb S$, a reduction of $s$ in $\mathbb S_0$ is an element $s'\in \mathbb S_0$ such that for every $r\in \mathbb S_0$ such that $r\leq s'$, $r$ is compatible with $s$.

\begin{lemma}
    For each $\gamma<\kappa^+$ and $\xi \in \kappa$, the set $E_{\xi}^\gamma = \{(f,p,q) \in \PP : (\gamma \in \dom(q) \andd  \exists n \in \omega\, (\xi, 1, n) \in q(\alpha)) \,\vee\, (f,p,q) \Vdash_{\gamma} \check \xi \in \dot A_\gamma\}$ is dense.
\end{lemma}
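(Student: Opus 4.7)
The plan is to split on whether the reduction of $(f,p,q)$ to $\mathbb P_\gamma$ already forces $\check\xi \in \dot A_\gamma$. Fix $(f,p,q) \in \PP$. If $(f,p,q) \Vdash_\gamma \check\xi \in \dot A_\gamma$, then $(f,p,q)$ is already in $E_\xi^\gamma$ and we are done. Otherwise, pick $(f_1,p_1,q_1) \leq (f,p,q|\gamma)$ in $\mathbb P_\gamma$ with $(f_1,p_1,q_1) \Vdash_\gamma \check\xi \notin \dot A_\gamma$. By Lemma \ref{lemma:augmentation}(b), applied inside $\mathbb P_\gamma$ (the proof stays in $\mathbb P_\gamma$ since only the $p$-part is modified), I may further assume $\xi \in \dom p_1$; and by Lemma \ref{lemma:augmentation}(a) applied to the original $(f,p,q)$ I may assume $\gamma \in \dom q$.

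By Lemma \ref{lemma:iteration}, the triple $(f_2,p_2,q_2) := (f_1, p_1, q_1 \cup q|{[\gamma, \kappa^+)})$ is a common extension of $(f,p,q)$ and $(f_1,p_1,q_1)$ in $\PP$. The key construction is to insert a new promise at coordinate $\gamma$: let $\bar q(\eta) = q_2(\eta)$ for $\eta \neq \gamma$ and $\bar q(\gamma) = q_2(\gamma) \cup \{(\xi, 1, n)\}$, where $n \in \omega$ is chosen strictly larger than every third coordinate appearing in a triple of the form $(s, 0, m) \in q_2(\gamma)$. This is possible because $q_2(\gamma)$ is finite, and it makes condition Q3(b) at coordinate $\gamma$ vacuously satisfied for the new entry; Q3(a) at coordinate $\gamma$ holds because $\xi \in \dom p_2$. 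Hence $(f_2, p_2, \bar q) \in \mathbb Q$.

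The main verification is that $(f_2, p_2, \bar q) \in \PP$. Below $\gamma$ the $q$-part is unchanged from $q_2$, so $(f_2, p_2, \bar q|\gamma) = (f_1, p_1, q_1) \in \mathbb P_\gamma$. At the successor stage $\gamma+1$, clause vi) of Definition \ref{def:P} requires that every $(\mu, 1, m) \in \bar q(\gamma)$ satisfy $(f_1, p_1, q_1) \Vdash_\gamma \check\mu \notin \dot A_\gamma$: for the pre-existing entries $(\mu, 1, m) \in q(\gamma)$ this was already forced by $(f, p, q|\gamma)$ since $(f, p, q) \in \PP$, and is inherited by the stronger $(f_1, p_1, q_1)$; for the new entry $(\xi, 1, n)$ it is exactly the defining property of $(f_1, p_1, q_1)$. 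For coordinates above $\gamma+1$ one finishes by recognising $\bar q$ as $\bar q|(\gamma+1)$ glued with $q_2|{(\gamma+1, \kappa^+)}$ and applying Lemma \ref{lemma:iteration} a second time to $(f_2, p_2, \bar q|(\gamma+1)) \in \mathbb P_{\gamma+1}$ below $(f_2, p_2, q_2|(\gamma+1))$. The only real obstacle is to see that the successor-admissibility encoded in vi) is precisely what permits the insertion of $(\xi, 1, n)$, and the ``otherwise'' branch was engineered exactly to supply the forcing statement vi) demands at stage $\gamma+1$.
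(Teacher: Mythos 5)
Your proof is correct and follows essentially the same route as the paper's: extend the reduction in $\mathbb P_\gamma$ to a condition forcing $\check\xi\notin\dot A_\gamma$, glue it back via Lemma \ref{lemma:iteration}, and insert a new promise $(\xi,1,n)$ at coordinate $\gamma$ with $n$ fresh so that Q3 and clause vi) are preserved. The only differences are cosmetic (you arrange $\xi\in\dom p$ after extending rather than at the outset, and your choice of $n$ avoids only the $(s,0,m)$-entries, which suffices).
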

\begin{proof}
    
 Let $(f,p,q) \in \PP$. Without loss of generality assume that $\xi \in \dom p$, $\gamma \in \dom q$ and that $(f,p,q) \not\Vdash_\gamma \check \xi \in \dot A_\gamma$. Then there must exist $(f',p',q') \in \PP_\gamma$ below $(f,p,q)$ such that $(f',p',q') \Vdash_\gamma \check \xi \not\in \dot A_\gamma$. We may fix $n \in \omega$ such that no element of $q'(\gamma)$ has $n$ as its third entry. Let $q''$ be defined on $\dom q\cup \dom q'$ by:
 
 \begin{equation}
     q''(\zeta)=\begin{cases}
         q'(\zeta),&\text{if } \zeta<\gamma,\\
         q(\zeta),&\text{if }\zeta>\gamma,\\
         q(\zeta)\cup\{(\xi, 1, n)\}, &\text{if } \zeta=\gamma.
     \end{cases}
 \end{equation}
 
 Now $(f',p',q''|(\gamma+1))$ satisfies the conditions established in (Q3) and (vi) from before. It follows from Lemma \ref{lemma:iteration} that $(f',p',q'') \in E_\xi^\gamma$ and $(f',p',q'') \leq (f,p,q)$.
 
\end{proof}

Throughout this paper, we will use the following notation: $\dot G=\{(p, \check p): p \in \mathbb P\}$ to describe the usual $\PP$-name for a generic filter. Moreover, by the Maximal principle, we fix the following notation:

\begin{definition}
    For every $\gamma\leq \kappa^+$:

    \begin{itemize}
        \item The $\mathbb P_\gamma$-name $\dot G_\gamma$ is the usual name for a $\mathbb P_\gamma$-filter: $\{(p, \check p): p \in \mathbb P_\gamma\}$
        \item $\dot G=\dot G_{\kappa^+}$.
        \item $\dot x$ is a $\mathbb P_0$-name, $\Vdash_{0} ``\dot x \text{ is a family of domain } \check \kappa"$ and $$\Vdash_0 \forall \xi \in \check \kappa \,\dot x_\xi=\bigcup\{p(\xi): \exists f, q\, (f, p, q)\in \dot G_0\}.$$
        
        \item $\dot X$ is a $\mathbb P_0$-name and $\Vdash_{0} \dot X=\{\dot x_\xi: \xi<\check \kappa\}.$

        \item $\dot P$ is a $\mathbb P_0$-name and $\Vdash_{0} ``\dot P \text{ is a sequence and } \forall n \in \omega\, \dot P_n=\{(\alpha, \beta)\in \check \kappa^2: \exists (f, p, q)\in \dot G_0\, f(\alpha, \beta)=n\}"$.
    \end{itemize}
\end{definition}

Of course, as for every $\gamma\leq \kappa^+$, $\mathbb P_0\subseteq^c\mathbb P_\gamma$, we conclude that $\dot x, \dot X$ and $\dot P$ are also $\mathbb P_\gamma$-names and have the same properties with respect to $\gamma$, so, for instance,         $\Vdash_{\gamma} \dot x \text{ is a family of domain } \check \kappa $ $\andd \forall \xi \in \check \kappa\, \dot x_\xi=\bigcup\{p(\xi): \exists f, q\, (f, p, q)\in \dot G_\gamma\}$. Also, for instance, if $G_\gamma$ is $\mathbb P_\gamma$-generic over $V$, $G_{0}=\mathbb G_\gamma\cap \mathbb P_\gamma$ is $\mathbb P_0$-generic over $V$ and $\text{val}(\dot x, G_0)=\text{val}(\dot x, G_\gamma)$.

\begin{lemma}\label{lemma:part}We have:

\begin{itemize}
    \item $\Vdash_0 \forall \xi<\kappa\, \dot x_\xi \in 2^\omega$
    \item $\Vdash_0 \forall \xi, \xi'<\kappa \, \xi\neq \xi'\rightarrow \dot x_\xi\neq \dot x_{\xi'}$
    \item $\Vdash_0 (\dot P_n: n \in \omega) \text{ is a partition of } \kappa^2$.
\end{itemize}
\end{lemma}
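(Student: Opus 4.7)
The plan is to prove each of the three statements by a standard density argument, appealing only to Lemma \ref{lemma:augmentation}. Since $\mathbb P_0 = \{(f, p, q) \in \mathbb Q : q = \emptyset\}$, the third coordinate plays no role here, and conditions can essentially be thought of as pairs $(f, p) \in \Fn(\kappa^2, \omega) \times \Fn(\kappa, 2^{<\omega})$.

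For the first bullet, I would first observe that $\dot x_\xi$ is forced to be a (partial) function: if two conditions in the generic filter are compatible in $\mathbb P_0$, then their $p(\xi)$ components are comparable as strings, because any common extension has $p(\xi)$ extending both. It then suffices to show that for each $\xi < \kappa$ and each $n \in \omega$, the set $\{(f, p, \emptyset) \in \mathbb P_0 : \xi \in \dom p \andd |p(\xi)| \geq n\}$ is dense. Given any condition, I would apply Lemma \ref{lemma:augmentation}(b) to place $\xi$ in $\dom p$ if needed, and then apply part (c) repeatedly to extend $p(\xi)$ to length at least $n$.

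For the second bullet, given $\xi \neq \xi'$, I would show that the set of $(f, p, \emptyset) \in \mathbb P_0$ with $\xi, \xi' \in \dom p$ and $p(\xi) \perp p(\xi')$ is dense. Starting from any condition, first use (b) to ensure $\xi, \xi' \in \dom p$, then use (c) to extend $p(\xi)$ and $p(\xi')$ to a common length $k$; if they already disagree we are done, and otherwise a further application of (c) extending $p(\xi)$ by $0$ and $p(\xi')$ by $1$ at position $k$ produces the desired incompatibility.

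For the third bullet, disjointness is immediate from the order on $\mathbb P_0$: if $(f, p, \emptyset)$ has $f(\alpha, \beta) = n$, any stronger condition $(f', p', \emptyset)$ satisfies $f \subseteq f'$, so $f'(\alpha, \beta) = n$ and no condition in the generic filter can place $(\alpha, \beta)$ in $\dot P_m$ for $m \neq n$. Covering follows from Lemma \ref{lemma:augmentation}(d), which gives that for each $(\alpha, \beta) \in \kappa^2$ the set of conditions with $(\alpha, \beta) \in \dom f$ is dense. There is essentially no obstacle to overcome: the content of this lemma is that the basic genericity properties hold for the Cohen-like factor $\mathbb P_0$, and the augmentation lemma has been set up so that each required density reduces directly to one of its four clauses.
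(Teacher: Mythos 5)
Your argument is correct and takes essentially the same route as the paper, which simply exhibits the three dense sets $\{(f,p,q): \xi\in\dom p \andd |p(\xi)|\geq n\}$, $\{(f,p,q): \xi,\xi'\in\dom p \andd p(\xi)\perp p(\xi')\}$ and $\{(f,p,q): (\alpha,\beta)\in\dom f\}$ and leaves the density checks (which reduce to Lemma \ref{lemma:augmentation}) to the reader. If anything your write-up is slightly more careful than the paper's, which states the second dense set with the condition $p(\xi)\neq p(\xi')$ where incompatibility of the strings, as you require, is what is actually needed to conclude $\dot x_\xi\neq\dot x_{\xi'}$.
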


Again, of course all these claims also hold by changing $0$ to $\gamma$ for $\gamma\leq \kappa^+$.

\begin{proof}
    Consider the dense sets $\{(f, p, q): \xi\in \dom p \vee |p(\xi)|\geq n\}$, $\{(t, p, q): \xi, \xi'\in \dom p \vee p(\xi)\neq p(\xi')\}$ and $\{(f, p, q): (\alpha, \beta) \in \dom f\}$, for $\xi,\xi'<\kappa$ with $\xi\neq \xi'$, $n \in \omega$ and $(\alpha, \beta)\in \kappa^2$. 
\end{proof}

Lemma \ref{lemma:iteration} allows us to view $\mathbb P$ as an iteration. Moreover, it is easy to verify that $$\Vdash_\gamma\, \mathbb P(\dot A_\gamma, \dot X_{G_\gamma}) \text{ is densely embeddable in }P_{\gamma+1}/\mathbb P_\gamma=\{p \in \mathbb P_{\gamma+1}: \forall p' \in \dot G_\gamma\, p\not \perp p'\}.$$ We will not use this fact directly, but it helps to visualize the proof below.

\begin{theorem}
    $\Vdash \dot X \mbox{ is a Q-set}$
\end{theorem}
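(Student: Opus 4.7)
\emph{Plan.} The plan is to show that every subset $A\subseteq X$ in $V[G]$ is a relative $G_\delta$ of $X$ by exhibiting a stage $\delta<\kappa^+$ at which the generically-defined open sets $U_n^\delta=\bigcup\{[s]:\exists (f,p,q)\in G,\,(s,0,n)\in q(\delta)\}$ witness $A=X\cap\bigcap_n U_n^\delta$. Three ingredients combine: identifying $A$ with a subset of $\check\kappa$ via the bijection $\xi\mapsto x_\xi$ from Lemma \ref{lemma:part}; reflecting the nice $\mathbb P$-name for that subset down to some $\mathbb P_{\gamma_0}$ and then using the bookkeeping $g$ to recover it at a later stage $\delta$; and the two density lemmas already established.

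First I would work in $V[G]$ and set $A'=\{\xi<\kappa:x_\xi\in A\}$, which by Lemma \ref{lemma:part} is in bijection with $A$. Back in $V$ I pick a nice $\mathbb P$-name $\dot A'$ for $A'$ of the form $\bigcup_{\xi<\kappa}\{\check\xi\}\times \mathcal{A}_\xi$ with each $\mathcal{A}_\xi$ an antichain in $\mathbb P$. Since $\mathbb P$ has the Knaster property each $\mathcal{A}_\xi$ is countable, so the total support has size at most $\kappa$. Because $\mathbb P=\bigcup_{\gamma<\kappa^+}\mathbb P_\gamma$ at the final limit stage and $\mathrm{cf}(\kappa^+)>\kappa$, this support is contained in some $\mathbb P_{\gamma_0}$; Lemma \ref{lemma:iteration} then guarantees that the $\mathcal{A}_\xi$ remain antichains in $\mathbb P_{\gamma_0}$ and that the interpretation is unchanged, so $\dot A'$ is actually a nice $\mathbb P_{\gamma_0}$-name. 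Clause v) of Definition \ref{def:P} lets me write $\dot A'=\dot A^{\gamma_0}_\eta$ for some $\eta<\kappa^+$, and applying $g^{-1}$ I pick $\delta<\kappa^+$ with $g(\delta)=(\gamma_0,\eta)$ (necessarily $\delta\geq \gamma_0$), obtaining $\dot A_\delta=\dot A'$.

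Next I verify $A=X\cap\bigcap_n U_n^\delta$ using the two density lemmas. If $x_\xi\in A$ and $n\in\omega$, the density of $D^\delta_{n,\xi}$ yields $(f,p,q)\in G$ realizing one of its two clauses; the clause $(\xi,1,n)\in q(\delta)$ is forbidden because clause vi) of Definition \ref{def:P} would then force $\check\xi\notin\dot A_\delta=\dot A'$, contradicting $\xi\in A'$. Hence $(p(\xi),0,n)\in q(\delta)$, and since $p(\xi)$ is an initial segment of $x_\xi$ we obtain $x_\xi\in[p(\xi)]\subseteq U_n^\delta$. Conversely, if $x_\xi\notin A$, the density of $E^\delta_\xi$ yields $(f,p,q)\in G$; the clause $(f,p,q)\Vdash_\delta\check\xi\in\dot A_\delta$ would contradict $\xi\notin A'$, so some $(\xi,1,n)\in q(\delta)$. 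Any putative witness $(s,0,n)\in q'(\delta)$ for $(f',p',q')\in G$ with $s\subseteq x_\xi$ could be amalgamated with $(f,p,q)$ into a common extension $(f'',p'',q'')\in G$; condition Q3(b) would then give $p''(\xi)\perp s$, but $s$ and $p''(\xi)$ are both initial segments of $x_\xi$ and hence compatible, a contradiction. So $x_\xi\notin U_n^\delta$.

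The main obstacle is the reflection step together with carefully matching the ``promises'' encoded in $q(\delta)$ with the actual interpretation of $\dot A_\delta$ after $\delta$ is chosen. The reflection requires both the Knaster property (to bound each antichain by $\omega$) and the direct-limit structure of $\mathbb P$ at stage $\kappa^+$ (so cofinality forces the support into some $\mathbb P_{\gamma_0}$), along with Lemma \ref{lemma:iteration} to preserve antichains and interpretation. After this, the remainder is a case analysis in each density lemma where clause vi) is critical for ruling out the ``wrong'' branch, and condition Q3(b) handles the final compatibility check.
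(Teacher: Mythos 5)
Your proposal is correct and follows essentially the same route as the paper: reduce to a nice name for a subset of $\check\kappa$, reflect it into some $\mathbb P_{\gamma_0}$ using the ccc and the regularity of $\kappa^+$, locate the stage $\delta$ via the bookkeeping function so that $\dot A_\delta$ is that name, and then read off the equality $A=X\cap\bigcap_n U_n^\delta$ from the density of $D^\delta_{n,\xi}$ and $E^\delta_\xi$. The paper compresses the final verification into one line, whereas you spell out the amalgamation/Q3(b) argument, but the content is the same.
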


\begin{proof}
    It suffices to see that, given a $\mathbb{P}$-name $\tau$ such that $\Vdash \tau \subseteq \check \kappa$, we have $$\Vdash \{\dot x_\xi : \xi \in \tau\} \mbox{ is } G_\delta \mbox{ in } \dot X.$$
    
    Fix such a $\tau$ and let $\tau'$ be a nice name for a subset of $\kappa$ such that $\Vdash \tau' = \tau$. Since $\kappa^+$ is regular, $\kappa<\kappa^+$ and $\mathbb P$ has the countable chain condition, there exists $\zeta < \kappa^{+}$ such that $\tau'$ is a $\mathbb{P}_\zeta$-name. By Definition \ref{def:P} fix $\gamma, \eta < \kappa^{+}$ such that $\tau' = \dot A_\gamma=\dot A^{\zeta}_{\eta}$ and $f(\gamma) = (\zeta,\eta)$.
    
    Let $\dot U$ be a $\mathbb{P}$-name such that $$\Vdash ``\dot U \mbox{ is a sequence} \andd \forall n \in \omega \,\dot U_n = \bigcup\{[s] : \exists (f,p,q) \in \dot G\,  \gamma \in \dom(q) \andd (s,0,n) \in q(\gamma)\}".$$

    We will prove that $\Vdash \dot X\cap \bigcap_{n \in \omega} \dot U_n=\{\dot x_\xi: \xi \in \tau\}$. This is equivalent to showing that $\Vdash \dot X\setminus \bigcup_{n \in \omega} \dot U_n=\{\dot x_\xi: \xi \in \kappa\setminus\tau\}$.

    For every $\xi \in \kappa$ and $n \in \omega$, $D^{\gamma}_{n,\xi}$ is dense, which proves that $\Vdash \bigcup \dot X\setminus \bigcup_{n \in \omega} \dot U_n\subseteq\{\dot x_\xi: \xi \in \kappa\setminus\tau\}$. On the other hand, for each $\xi \in \kappa$, $E^\gamma_\xi$ is dense, which shows that $\Vdash \{\dot x_\xi: \xi \in \kappa\setminus\tau\}\subseteq \dot X\setminus \bigcup_{n \in \omega} \dot U_n$.
    
    \end{proof}

The definition below is heavily inspired by the one of similar name found in \cite{BrendleQ}. In order to ease the notation at several points of the proof of the main theorem, we will work with conditions such that $\dom f=(\dom p)^2$.
    \begin{definition}
        
        We say that a condition $(f, p, q)\in \mathbb P$ is squared if $\dom f= (\dom p)^2$.

       Fix a infinite countable set $Z=\{z_n: n \in \omega\}$, enumerated injectively, disjoint from $\kappa^+$. A pattern is a $6$-tuple $C=(\Gamma, \Delta,\bar D=(D_\gamma: \gamma \in \Gamma), \bar E=(E_\gamma: \gamma \in \Gamma), \bar \tau=(\tau_\xi: \xi \in \Delta), \bar l=(l_{\rho_0, \rho_1}: \rho_0, \rho_1 \in \Delta))$ such that:

       \begin{itemize}
           \item $\Gamma\subseteq \kappa^+$ is finite.
           \item $\Delta\subseteq \kappa\cup Z$ is finite, and $\Delta\cap Z$ is a initial segment of $Z$.
           \item $D_\gamma\subseteq 2^{<\omega}\times \omega$ is finite for each $\gamma \in \Gamma$.
           \item $E_\gamma\subseteq \Delta\times \omega$ is finite for each $\gamma \in \Gamma$.
           \item $\tau_\xi \in 2^{<\omega}$ for each $\xi \in \Delta$.
           \item $l_{\rho_0, \rho_1} \in \omega$ for each $\rho_0, \rho_1 \in \Delta$.

       \end{itemize}

       The set of all patterns is called $\PAT$.

        Let $X\subseteq \kappa^+$ and $Y\subseteq \kappa$ and $(f, p, q)\in \mathbb P$ a squared condition. We say that a pattern $(\Gamma, \Delta,\bar D, \bar E, \bar \tau, \bar l)$ is a $(X, Y)$-pattern iff there exists a (necessarily unique) bijection $\phi: \Delta\rightarrow \dom p$ such that:

               \begin{itemize}
           \item $\Gamma= X\cap \dom q$, $\Delta\subseteq Y\cup Z$, $\dom p \cap Y=\Delta \cap Y$.
           \item $\phi|_{\Delta \cap \dom p}=\phi|_{\Delta\cap Y}$ is the inclusion in $\dom p$, $\phi|_{\Delta\cap Z}\rightarrow \dom p\setminus Y$ is an order isomorphism.
            \item For all $\xi \in \Delta$, $\tau_\xi=p(\phi(\xi))$.
            \item For all $\rho_0, \rho_1 \in \Delta$, $l_{\rho_0, \rho_1}=f(\phi(\rho_0), \phi(\rho_1))$.
            \item For all $\gamma\in \Gamma$, $(s, n)\in D_\gamma$ iff $(s, 0, n)\in q(\gamma)$.
            \item For all $\gamma\in \Gamma$, $(\xi, n)\in E_\gamma$ iff $(\phi(\xi), 1, n)\in q(\gamma)$.
       \end{itemize}

    \end{definition}
    It is easy to verify that squared conditions are dense, and that for every pair $(X, Y)$ as above, every squared condition $(f, p, q)$ has an unique $(X, Y)$-pattern for it.

    \begin{theorem}\label{thm:mainforcing}
        $\Vdash \dot X^2 \text{ is not a } \Delta\text{-}\text{set}$.
    \end{theorem}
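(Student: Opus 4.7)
The plan is to argue by contradiction: assume $p_0 \Vdash \dot V=(\dot V_n)_{n\in\omega}$ is a point-finite open expansion of the partition $(\dot P_n)_{n\in\omega}$ of $\dot X^2$, and locate names $(\dot x_{\alpha^*},\dot x_{\beta^*})$ forced to lie in infinitely many $\dot V_n$. The starting observation is that, since $\dot V_n$ is forced to be open and to contain the (nonempty) $\dot P_n$, for every $n\in\omega$ and every fresh pair $(\alpha,\beta)\in(\kappa\setminus\dom p_{p_0})^2$ one can produce a squared condition $r^{n,\alpha,\beta}\leq p_0$ with $f_{r^{n,\alpha,\beta}}(\alpha,\beta)=n$ and basic clopen sets $s=p_{r^{n,\alpha,\beta}}(\alpha)$, $t=p_{r^{n,\alpha,\beta}}(\beta)$ satisfying $r^{n,\alpha,\beta}\Vdash [s]\times[t]\cap\dot X^2\subseteq\dot V_n$.

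The combinatorial heart is to extract, from these $\omega\cdot\kappa$ many witnesses, an uncountable pairwise compatible family $\mathcal{F}$ whose members share a common \emph{reduced} $(X_0,Y_0)$-pattern (with $X_0=\dom q_{p_0}$, $Y_0=\dom p_{p_0}$, the reduction being to forget the $l_{z_0,z_1}$-entry) and a common box $(s,t)$, while the set $J=\{n:r^{n,\alpha,\beta}\in\mathcal{F}\text{ for some }(\alpha,\beta)\}$ is infinite. The main obstacle is that the full pattern records $l_{z_0,z_1}=n$, so naive pigeonhole on the $\omega$-indexed witnesses into countably many pattern classes fails; the remedy is to fix a uniform shape of extension (e.g.\ add exactly two fresh $p$-indices and no new $q$-indices), vary the fresh pair over $\kappa$ options per $n$ to obtain a $\kappa$-sized family, combine the Knaster property (giving uncountable pairwise compatibility) with pigeonhole on the countably many reduced patterns and boxes (giving uncountable items in one class), and employ a diagonal bookkeeping over $n$ to keep $J$ cofinal in $\omega$.

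Having $\mathcal{F}$, I would fix $(\alpha^*,\beta^*)\in(\kappa\setminus\dom p_{p_0})^2$ disjoint from the fresh supports used in $\mathcal{F}$ and build $r^*\leq p_0$ squared with $p_{r^*}(\alpha^*)=s$, $p_{r^*}(\beta^*)=t$. For each $N\in\omega$, the set
\[
D_N = \{r\leq r^* : \exists n\in J,\ n\geq N,\ r\Vdash (\dot x_{\alpha^*},\dot x_{\beta^*})\in\dot V_n\}
\]
is dense below $r^*$: given $r'\leq r^*$ with finite support, choose $r^{n,\alpha,\beta}\in\mathcal{F}$ with $n\in J$, $n\geq N$, whose fresh support is disjoint from $\dom p_{r'}\cup\dom q_{r'}$ (possible since $\mathcal{F}$ is uncountable while $r'$ has finite support). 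By disjointness of fresh supports and agreement on $p_0$'s data, $r'$ and $r^{n,\alpha,\beta}$ are compatible, and their common extension forces both $[s]\times[t]\cap\dot X^2\subseteq\dot V_n$ (from $r^{n,\alpha,\beta}$) and $(\dot x_{\alpha^*},\dot x_{\beta^*})\in[s]\times[t]$ (from $r'\leq r^*$), hence $(\dot x_{\alpha^*},\dot x_{\beta^*})\in\dot V_n$. Density of $D_N$ for every $N$ forces $\{n:(\dot x_{\alpha^*},\dot x_{\beta^*})\in\dot V_n\}$ to be infinite below $r^*$, contradicting the point-finiteness forced by $p_0$.
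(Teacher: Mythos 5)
Your proposal follows a genuinely different route from the paper's (which reflects a given condition into a countable elementary submodel via patterns and then surgically transfers the reflected witness back to the original coordinate $\alpha$), but it has two concrete gaps. The first is in the extraction of $\mathcal F$: you require a \emph{single fixed} box $(s,t)$ shared by all members while $J$ is infinite, and this is unobtainable. For each fixed $n$, pigeonhole over the $\kappa$ many fresh pairs does yield uncountably many witnesses with a common box $(s_n,t_n)$, but the assignment $n\mapsto(s_n,t_n)$ maps $\omega$ into a countable set and need not have an infinite fiber; no ``diagonal bookkeeping'' can create one. Worse, the intermediate claim is simply false for some names: since each $\tilde P_{n+1}$ is forced to meet every basic box of $X^2$, one can choose finite sets $C_n\subseteq \tilde P_{n+1}$ meeting every box $[s]\times[t]$ with $|s|,|t|\leq n$ and set $\dot V_n=(2^\omega)^2\setminus C_n$; this is an open expansion of $(\tilde P_n)_n$, yet for every fixed box $[s]\times[t]$ the set of $n$ with $[s]\times[t]\cap \dot X^2\subseteq \dot V_n$ is finite. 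This is exactly why the paper fixes only the pair $(\alpha,\beta)$ in advance and lets the box $(s_0,s_1)$ depend on $N$ and on the condition currently being extended.

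The second gap is the compatibility claim in the density argument: disjointness of fresh supports plus agreement on $p_0$'s data does \emph{not} imply compatibility in $\mathbb P$. The condition $r'$ and the witness $r^{n,\alpha,\beta}$ still share the coordinates $\gamma\in\dom q_{p_0}$, and each may have added new triples to $q(\gamma)$ there (you cannot forbid this with a ``uniform shape of extension'': deciding statements about the arbitrary name $\dot V_n$ may force you to grow the $q$-part at old coordinates). If $r'$ adds $(s,0,m)$ to $q(\gamma)$ while the witness adds $(\xi,1,m)$ with $\xi$ one of its fresh $p$-indices, clause (b) of Q3) for the amalgam requires $p(\xi)\perp s$, and nothing in your construction guarantees this; a reduced pattern cannot help, because $r'$ is an arbitrary extension of $r^*$ and conforms to no pattern. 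This is precisely the obstruction the paper's argument is built to circumvent: it amalgamates $t$ only with its elementary reflection $t'$, whose $q$-entries at shared $\gamma$ have the same $0$-part by the pattern correspondence, and then verifies clause (b) case by case (Cases A, B, b1, b2) after replacing $\bar p(\alpha)$ by $\tilde p(\alpha')$. Without an argument of that kind, your sets $D_N$ need not be dense.
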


    \begin{proof}
        Let $\tilde P$ be a $\mathbb P$-name such that:
        $\Vdash ``\tilde P \text{ is a sequence and } \forall n \in \omega\, \tilde P_n=\{(\dot x_\alpha, \dot x_\beta): (\alpha, \beta) \in \dot P_n\}"$. By Lemma \ref{lemma:part}, $\Vdash ``\tilde P \text{ is a partition of } \dot X^2"$. We will show that $\tilde P$ is forced to not have a point finite open expansion.

        It suffices to show that for every name $\dot V$ such that $$\Vdash ``\dot V \text{ is a sequence of open subsets of } (2^\omega)^2 \andd \forall n \in \omega\, \tilde P_n \subseteq \dot V_n",$$ there exists $\alpha, \beta \in \kappa$ such that $\Vdash (\dot x_{\check \alpha}, \dot x_{\check \beta})\in \bigcap_{n \in \omega} \dot V_n$.

        For each $n \in \omega$, let $\dot S_n$ be a name such that $\Vdash \dot S_n=\{(s_0, s_1) \in (2^{<\omega})^2: [s_0]\times [s_1]\subseteq \dot V_n\}$. For each $n \in \omega$ and $s_0, s_1\in 2^{<\omega}$, let $\mathcal A^n_{s_0, s_1}$ be a maximal antichain in $\mathbb P$ such that for every $t^* \in A^n_{s_0, s_1}$, either $t^*\Vdash (\check s_0, \check s_1) \in \dot S_n$ or $t^*\Vdash (\check s_0, \check s_1) \notin \dot S_n$.

    Let $\chi>\kappa^+$ be a regular uncountable cardinal large enough so $\mathbb P, Z, \PAT, (\dot A^n_{s_0, s_1}: n \in \omega, s_0, s_1\in 2^{<\omega})$ are all elements of $H(\chi)$. Let $M$ be a countable elementary submodel of $H(\chi)$ so that all these same elements and $\kappa$ are members of $M$.

    Let $o(M)$ be the first ordinal not in $M$. Let $\alpha, \beta$ be ordinals such that $o(M)\leq \alpha<\beta<\omega_1$. We claim that:

    $$\Vdash (\check \alpha, \check \beta) \in \bigcap_{n \in \omega}\dot U_n.$$

    We will prove that for every  $(f, p, q)\in \mathbb P$ and every $N \in \omega$, there exists $(\bar f, \bar p, \bar q)\leq (f, p, q)$ and $s_0, s_1 \in 2^{<\omega}$ such that $\alpha, \beta \in \dom \bar p$ and  $s_0\subseteq \bar p(\alpha)$, $s_1\subseteq \bar p(\beta)$ and $(\bar f, \bar p, \bar q)\Vdash (\check s_0, \check s_1) \in \dot S_N$. This ends the proof by a standard density argument.

    Fix $t=(f, p, q)$ and $N \in \omega$. Without loss of generality, $t$ is a squared condition with $\alpha, \beta \in \dom p$.
    
    Let $X=\dom q \cap M$, $Y=\dom p\cap M$. Let $C=(\Gamma, \Delta,\bar D, \bar E, \bar \tau, \bar l)$ be the unique $(X, Y)$-pattern for $(f, p, q)$. We have the inclusion $(X, Y)\in M$ by finiteness, and $C$ is in $M$ as well. By elementarity, there exists a squared condition $t'=(f', p', q')$ in $M$ such that $(X, Y)$ is a pattern for $t'$. Let $\phi_t, \phi_{t'}$ be the functions that verify that $C$ is a $(X, Y)$-pattern for $t$, $t'$, respectively. By elementarity, $\phi_{t'}\in M$.

    As $\alpha\notin M$, there exists $i$ such that $z_i \in \Delta\cap Z$ and $\phi_t(z_i)=\alpha$. Let $\alpha'=\phi_{t'}(z_i)$.

    $\alpha' \notin \dom p$, for if it was, it would be in $Y=M\cap \dom p$, so $\phi_{t'}(z_i)=\alpha'$, by definition, and $\phi_{t'}(\alpha')=\alpha'$, as $\alpha'\in \dom p'\cap Y$ and $\phi_{t'}$ is the identity in there, contradicting the injectivity of $\phi'_t$. On the other hand, $\beta \notin \dom p'$, as $\beta \notin M$ and $\dom p'\subseteq M$. Thus, $(\alpha', \beta)\notin \dom f\cup \dom f'$ since both domains are squares.

    $p'$ and $p$ agree on $\dom p \cap \dom p'$: if $\xi \in \dom p'\cap \dom p$, we have $\xi \in M\cap \dom p=Y$, so $p(\xi)=p(\phi_t(\xi))=\tau_\xi=p'(\phi_{t'}(\xi))=p'(\xi)$.

    $f'$ and $f$ agree on $\dom f\cap \dom f'$: if $(\rho_0, \rho_1) \in \dom f\cap \dom f'=(\dom p\cap \dom p')^2$, then $\rho_0, \rho_1 \in \dom p'\cap \dom p\subseteq M\cap \dom p=Y$, so $f(\rho_0, \rho_1)=f(\phi_t(\rho_0), \phi_t(\rho_1))=l_{\rho_0, \rho_1}=f'(\phi_{t'}(\rho_0), \phi_{t'}(\rho_1))=f'(\rho_0, \rho_1)$.

    If $\gamma\in \dom q\cap \dom q'$, then $q(\gamma)\cap [2^{<\omega}\times \{0\}\times \omega]^{<\omega}=q'(\gamma)\cap [2^{<\omega}\times \{0\}\times \omega]^{<\omega}$: if $\gamma \in \dom q\cap \dom q'$, then $\gamma \in \dom q\cap M=Y$, so for every $s \in 2^{<\omega}$, $\xi \in \kappa$ and $n \in \omega$, $(s, 0, n) \in q(\gamma)\iff (s, n)\in D_\gamma \iff (s, 0, n)\in q(\gamma')$.

    Now we show that $t$ and $t'$ are compatible by constructing $t''=(f'', p'', q'')$ such that:

    \begin{itemize}
        \item $\dom f''=\dom f\cup \dom f'\cup \{(\alpha', \beta)\}$,
        \item $f''(\rho_0, \rho_1)=\begin{cases}N&\text{if } (\rho_0, \rho_1)=(\alpha', \beta),\\f(\rho_0, \rho_1)&\text{if } (\rho_0, \rho_1) \in \dom f,\\f'(\rho_0, \rho_1)&\text{if }(\rho_0, \rho_1) \in \dom f'.\end{cases}$
        \item $\dom p''=\dom p\cup \dom p'$,
        \item $p''(\xi)=\begin{cases}f(\xi)&\text{if }\xi \in \dom p,\\ p'(\xi) &\text{if } \xi \in \dom p'.\end{cases}$
        \item $\dom q=\dom q\cup \dom q'$
        \item $q''(\gamma)=\begin{cases}q(\gamma)& \text{if }\gamma \in \dom q\setminus \dom q, \\ q'(\gamma)& \text{if } \gamma \in \dom q'\setminus \dom q,\\q'(\gamma)\cup q(\gamma)&\text{if }\gamma \in \dom q\cap \dom q'.\end{cases}$
    \end{itemize}

    With a straightforward induction, one shows that $(f'', p'', q''|\gamma)\in \mathbb P_\gamma$ and extends both $(f, p, q|\gamma)$ and $(f', p', q'|\gamma)$ for every $\gamma\leq \kappa^+$.

    Now, $t''\Vdash (\alpha', \beta)\in \dot P_{\check N}$, so $t''\Vdash (\dot x_{\alpha'}, \dot x_\beta)\in \dot V_{\check N}$.

    Thus, there exists $\tilde t=(\tilde f, \tilde p, \tilde q)\leq t''$ and $s_0, s_1 \in 2^{<\omega}$ such that $\tilde t \Vdash (\dot x_{\alpha'}, \dot x_\beta)\in [\check s_0]\times [\check s_1]\subseteq \dot V_N$,
    thus, extending it if necessary, we can consider that $s_0\subseteq \tilde p(\alpha')$, $s_1\subseteq \tilde p(\beta)$ and $\tilde t\Vdash (\check s_0, \check s_1)\in \dot P_N$. There exists $t^*=(f^*, p^*, q^*)\in \mathcal A^N_{s_0, s_1}$ such that $t^*\not \perp \tilde t$. As $\tilde t\Vdash (\check s_0, \check s_1)\in \dot P_N$, we also have $t^*\Vdash (\check s_0, \check s_1)\in \dot P_N$. By extending $\tilde t$ even further, we may assume that $\tilde t\leq t^*$.

    Notice that $\tilde t$ has all the requirements needed for $\tilde t$, besides possibly $s_0 \subseteq \tilde p(\alpha)$ - but we have the inclusion $s_0\subseteq \tilde p(\alpha')$.
    
    Next, we use $\tilde t$ to construct a condition $\bar t=(\bar f, \bar p, \bar q)$ such that $\bar t\leq t^*, t$, $\bar p(\beta)=\tilde p(\beta)$ and  $\bar p(\alpha)=\tilde p(\alpha')$. This will finish the proof, as we will have $\bar t\leq t$, $\bar t\Vdash (\check s_0, \check s_1)\in \dot S_N$ (as $\bar t\leq \bar t^*$), $s_0\subseteq \tilde p(\alpha')=\bar p(\alpha)$ and $s_1\subseteq \tilde p(\beta)=\bar p(\beta)$. 
    
    We define $\bar t$ by the following sentences:

    \begin{itemize}
        \item $\dom \bar f=\dom f \cup \dom f^*$,
        \item $\bar f(\rho_0, \rho_1)=\begin{cases}
            f^*(\rho_0, \rho_1), & \text{if } (\rho_0, \rho_1) \in \dom f^*,\\
            f (\rho_0, \rho_1), & \text{if } (\rho_0, \rho_1) \in \dom f\setminus \dom f^*.    
        \end{cases}$
        
        \item $\dom \bar p=\dom p \cup \dom p^*$.
        \item $\bar p(\xi)=\begin{cases}\tilde p(\xi), & \text{if } \xi\neq \alpha,\\
         \tilde p(\alpha'), & \text{if } \xi=\alpha.
            
        \end{cases}$
        \item $\dom \bar q=\dom q \cup \dom q^*$.
        \item $\bar q(\gamma)=\begin{cases}            q(\gamma), & \text{if }\gamma \in \dom q \setminus \dom q^*,\\            q^*(\gamma), & \text{if }\gamma \in \dom q^*\setminus \dom q,\\            q(\gamma)\cup q^*(\gamma) & \text{if }\gamma \in \dom q^*\cap \dom q.        \end{cases}$
        \end{itemize}

        Such a triple $\bar t$ is well defined, but we must verify that $\bar t\in \mathbb P$ and that $\bar t\leq t, t^*$.
        Notice that as $\alpha\neq \beta$,  $\bar p(\beta)=\tilde p(\beta)$ and that $\bar p(\alpha)=\tilde p(\alpha')$, as intended.

        Thus, recursively, we show that for every $\gamma\leq \kappa^+$, $(\bar f, \bar p, \bar q|\gamma)\in \mathbb P_\gamma$ and extends $(f, p, q|\gamma)$ and $(f^*, p^*, q^*|\gamma)$. The limit step is trivial, so we show how to verify this for $\gamma=0$ and for the successor step.\vspace{1em}

        \textbf{Step $\gamma=0$}: $(\bar f, \bar p, \emptyset)$ is clearly in $\mathbb P_0$. If $(\rho_0, \rho_1) \in \dom f\cap \dom f^*$, then $f^*(\rho_0, \rho_1)=\tilde f(\rho_0, \rho_1)=f(\rho_0, \rho_1)$, so $f\cup f^*\subseteq \bar f$.

        For every $\xi \in \dom p^*$, $\xi \neq \alpha$, so $p^*(\xi)\subseteq \tilde p(\xi)=\bar p(\xi)$.  Now suppose $\xi \in \dom p$. We must show that $p(\xi)\subseteq \bar p(\xi)$.
        
        \textbf{Case 1:} if $\xi\neq \alpha$, we have $\bar p(\xi)=\tilde p(\xi)\supseteq p(\xi)$.

        \textbf{Case 2:} if $\xi= \alpha$, we have $\bar p(\xi)=\bar p(\alpha)=\tilde p(\alpha')\supseteq p'(\alpha')=p'(\phi_{t'}\circ \phi_t^{-1}(\alpha))=\tau_{\phi_t^{-1}(\alpha)}=p(\alpha)$.\vspace{1em}

        \textbf{Step $\gamma+1$}: Assume that $(\bar f, \bar p, \bar q|\gamma)\in \mathbb P_\gamma$ extends $(f, p, q|\gamma)$ and $(f^*, p^*,  q^*|\gamma)$.

        We will show that for all $s\in 2^{<\omega}$, $\xi\in \kappa$ and  $n \in \omega$, if $(s, 0, n), (\xi, 1, n) \in \bar q(\gamma)$, we have $\xi \in \dom \bar p$ and $\bar p(\xi)\perp s$. This will guarantee that $(\bar f, \bar p, \bar q|_{\gamma+1})\in \mathbb Q$. Then it easily follows that it is in $\mathbb P_\gamma$ as well.
        
        Our claim is trivial if $\gamma \in \dom q\setminus \dom q^*$ or $\dom q^*\setminus \dom q$. So assume $\gamma \in \dom q\cap \dom q^*\subseteq X$. Then $\gamma \in \Gamma$. Again, this is trivial if both triples are in $q(\gamma)$, or in $q^*(\gamma)$. So we have only two cases:

        \textbf{Case A:} $(\xi, 1, n)\in q^*(\gamma)$ and $(s, 0, n)\in q(\gamma)$. In this case, $\xi \in \dom p^*$, so $\xi\neq \alpha$, therefore $\bar p(\xi)=\tilde p(\xi)\perp s$ as $(s, 0, n), (\xi, 1, n)\in \tilde q(\gamma)$.
        
        \textbf{Case B:} $(\xi, 1, n)\in q(\gamma)\setminus q^*(\gamma)$, $(s, 0, n)\in q^*(\gamma)$. We break this case into two subcases:
        
        \textbf{Subcase b1:} $\xi=\alpha$. In this case, $(\xi, 1, n)=(\alpha, 1, n)\in q(\gamma)$, so $(\phi_{t'}\circ \phi_t^{-1}(\alpha), 1, n)=(\alpha', 1, n)\in q'(\gamma)\subseteq \tilde q(\gamma)$. Also, $(s, 0, n)\in \tilde q(\gamma)\supseteq q^*(\gamma)$. Thus, $\bar p(\alpha)=\tilde p(\alpha')$ is incompatible with $s$.

        \textbf{Subcase b2:} $\xi\neq \alpha$. In this case, $(s, 0, n), (\xi, 1, n)\in \tilde q(\gamma)\supseteq q^*(\gamma)\cup q(\gamma)$, and $\bar p(\xi)=\tilde p(\xi)$. Thus, $\bar p(\xi)=\tilde p(\xi)$ is incompatible with $s$.
        
    \end{proof}

\section{On the existence of strong \texorpdfstring{$\Delta$}{Delta}-sets}\label{sec:strongdelta}

In this section, we prove that the existence of $\Delta$-sets implies the existence of strong $\Delta$-sets, that is, of $\Delta$-sets whose all finite powers are also $\Delta$-sets. We start by fixing the following notation:

\begin{definition}\label{def:algebranotation}
    Let $\mathcal A$ be a collection of sets. We define $\mathcal A_u$, $\mathcal A_i$, $\mathcal A_\delta$ $\mathcal A_\sigma$ as the collection of all finite unions, finite intersections, countable unions and finite intersections of $\mathcal A$, respectively. If all elements of $\mathcal A$ are contained in some set $Y$ which is clear from the context, $\mathcal A^c$ is the collection of all complements of members of $\mathcal A$ with respect to $Y$.
    
    If $(\mathcal A_i: i\leq r)$ is a sequence of collection of sets, $\bigotimes_{i\leq r} \mathcal{A}_i=\{\prod_{i\leq r}A_i: \forall i \leq r\, A_i \in \mathcal A_i\}$.
    
\end{definition}

\begin{definition}[\cite{szpilrajn1938characteristic}]\label{def:chi}
    Given a countable family of subsets of $\omega_1$, $\mathcal A = (A_n:n\in\omega)$, the characteristic function for the family $\mathcal{A}$ is $\chi_{\mathcal A}: \omega_1 \rightarrow 2^\omega$ given by, for every $\alpha \in \omega_1$ and $n \in \omega$ $\chi_{\mathcal A}(\alpha)(n) = \chi_{A_n}(\alpha)$.
\end{definition}

\begin{lemma}\label{lemma:chiprop}
    In the conditions of the definition above, for any $A_n \in \mathcal A$ the set $\chi_{\mathcal A}[A_n]$ is a clopen subspace of $\chi_{\mathcal A}[\omega_1]$. Moreover, the function $\chi_{\mathcal A}$ is injective if, and only if $\mathcal A$ separates points, that is, if for every $\alpha, \beta \in \omega_1$ there exists $n \in \omega$ such that $\alpha \in A_n$ and $\beta \notin A_n$.
\end{lemma}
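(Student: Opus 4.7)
The plan is to read both claims straight off the definition of $\chi_{\mathcal A}$, using only the product topology on $2^\omega$. For the clopen assertion, I would fix $n\in\omega$ and consider the basic clopen subset $B_n=\{f\in 2^\omega : f(n)=1\}$ of $2^\omega$. The key observation is the identity
\[
\chi_{\mathcal A}[A_n]\;=\;\chi_{\mathcal A}[\omega_1]\cap B_n,
\]
because for every $\alpha\in\omega_1$, $\chi_{\mathcal A}(\alpha)\in B_n$ means $\chi_{\mathcal A}(\alpha)(n)=1$, which by Definition \ref{def:chi} is equivalent to $\chi_{A_n}(\alpha)=1$, i.e. $\alpha\in A_n$. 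The left-to-right inclusion is immediate; for right-to-left, any element of the intersection is $\chi_{\mathcal A}(\alpha)$ for some $\alpha$ with $\alpha\in A_n$, hence lies in $\chi_{\mathcal A}[A_n]$. Since $B_n$ is clopen in $2^\omega$, the intersection with the subspace $\chi_{\mathcal A}[\omega_1]$ is clopen there.

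For the injectivity equivalence I would split into two standard directions. Assume first that $\chi_{\mathcal A}$ is injective, and fix distinct $\alpha,\beta\in\omega_1$. Then $\chi_{\mathcal A}(\alpha)\neq\chi_{\mathcal A}(\beta)$, so there is some $n\in\omega$ with $\chi_{A_n}(\alpha)\neq\chi_{A_n}(\beta)$; after possibly swapping the roles of $\alpha$ and $\beta$ one gets an $n$ satisfying $\alpha\in A_n$ and $\beta\notin A_n$, which is the separation condition. Conversely, if $\mathcal A$ separates points and $\alpha\neq\beta$, pick $n$ with $\alpha\in A_n$ and $\beta\notin A_n$; then $\chi_{\mathcal A}(\alpha)(n)=1\neq 0=\chi_{\mathcal A}(\beta)(n)$, so $\chi_{\mathcal A}(\alpha)\neq\chi_{\mathcal A}(\beta)$ and $\chi_{\mathcal A}$ is injective.

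There is no real obstacle here: the proof is essentially bookkeeping with characteristic functions, and the only mild subtlety is making sure the equality $\chi_{\mathcal A}[A_n]=\chi_{\mathcal A}[\omega_1]\cap B_n$ is stated and justified even in the case where $\chi_{\mathcal A}$ fails to be injective, so that the clopen claim does not implicitly rely on the separation property established in the second half of the lemma.
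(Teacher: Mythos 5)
Your proof is correct and follows exactly the paper's argument: the identity $\chi_{\mathcal A}[A_n]=\chi_{\mathcal A}[\omega_1]\cap\{f\in 2^\omega: f(n)=1\}$ is precisely the one-line justification the paper gives, and your treatment of the injectivity equivalence (which the paper leaves implicit) is the routine verification one would expect. The only cosmetic caveat is that ``after possibly swapping'' yields a witness for one of the two ordered pairs rather than the specific pair $(\alpha,\beta)$; since the lemma is only applied to algebras, which are closed under complementation, this distinction is immaterial.
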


Indeed it is immediate that $\chi_{\mathcal A}[A_n] = \{f \in 2^\omega: f(n)=1\} \cap \chi_{\mathcal A}[\omega_1]$ and therefore is a clopen.

\begin{definition}\label{def:ifunc}
    Let $r \in \omega$ and $i\leq  n$. A relation $f \subset \omega_{1}^{r+1}$ is said to be an $i$-function into $\omega_{1}^r$ if it is a function with respect to the $i$-th coordinate, that is, if for every $\alpha \in \omega_1$, $|\pi_i^{-1}[\{\alpha\}]\cap f|=1$

    If $f$ is an $i$-function into $\omega_1^r$, for every $\alpha\in \omega_1$, the unique $r$-tuple $(\beta_0, \dots, \beta_{r-1})$ such that $(\beta_j)_{j<i}^\frown(\alpha)^\frown(\beta_j)_{i\leq j<r} \in f$ is denoted by $f(\alpha)$, and such a $\beta_j$ is denoted by $f(j, \alpha)$. 
\end{definition}

The next two lemmas state useful properties of this class of objects for our purposes.

\begin{lemma}\label{lemma:ifuncdelta}
    Let $r \in \omega$, $i \leq r$ and $f$ an $i$-function into $\omega_1^{r}$. There exists a countable collection $\mathcal B\subseteq \mathcal P(\omega_1)$ such that $f \in (\bigotimes_{j\leq r} \mathcal B)_{u\delta}$.
\end{lemma}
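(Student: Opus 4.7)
The plan is to express $f$ as a countable intersection of finite unions of rectangles $\prod_{j\leq r} B_j$, where each $B_j$ belongs to a countable collection $\mathcal B \subseteq \mathcal P(\omega_1)$ to be constructed.

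Since $f$ is an $i$-function, for every $j \leq r$ with $j \neq i$ we get a genuine function $g_j : \omega_1 \to \omega_1$ defined by $g_j(\alpha) = f(j,\alpha)$. Note that
\[
f = \{(\beta_0,\dots,\beta_r)\in\omega_1^{r+1}: \forall j \neq i,\; \beta_j = g_j(\beta_i)\},
\]
so it suffices to encode each graph equality $\beta_j = g_j(\beta_i)$ as a countable intersection of finite unions of rectangles. I would start from any countable family $\mathcal B_0 \subseteq \mathcal P(\omega_1)$ that separates points of $\omega_1$ (for instance, the one induced by an injection $\omega_1\hookrightarrow 2^\omega$ via Definition \ref{def:chi} and Lemma \ref{lemma:chiprop}), ensuring also that $\omega_1 \in \mathcal B_0$. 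I then enlarge $\mathcal B_0$ to a countable family $\mathcal B$ by closing under complementation and under each of the finitely many unary preimage operations $B \mapsto g_j^{-1}[B]$ for $j\leq r$, $j\neq i$: iterating these over $\omega$ stages and taking the union keeps the family countable.

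With $\mathcal B$ in hand, for each $j\neq i$ and each $B\in\mathcal B$ I introduce two rectangles belonging to $\bigotimes_{k\leq r}\mathcal B$: in $R^{0}_{j,B}$ the $i$-th side is $g_j^{-1}[B]$, the $j$-th side is $B$, and every other side is $\omega_1$; in $R^{1}_{j,B}$ the corresponding sides are $g_j^{-1}[B^c]$ and $B^c$, with $\omega_1$ elsewhere. I would then show the identity
\[
f \;=\; \bigcap_{\substack{j\leq r\\ j\neq i}} \bigcap_{B\in\mathcal B} \bigl(R^{0}_{j,B}\cup R^{1}_{j,B}\bigr),
\]
which exhibits $f$ as a countable intersection of binary unions of elements of $\bigotimes_{k\leq r}\mathcal B$, hence $f\in(\bigotimes_{k\leq r}\mathcal B)_{u\delta}$. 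For the $\subseteq$ inclusion: if $(\beta_0,\dots,\beta_r)\in f$ and $j\neq i$, then $\beta_j = g_j(\beta_i)$, so depending on whether $g_j(\beta_i)\in B$ or not the tuple lies in $R^{0}_{j,B}$ or in $R^{1}_{j,B}$. For $\supseteq$: any tuple in the intersection satisfies $\beta_j\in B \iff g_j(\beta_i)\in B$ for every $B\in\mathcal B$ and every $j\neq i$; since $\mathcal B$ separates points of $\omega_1$, this forces $\beta_j = g_j(\beta_i)$ for each such $j$, so the tuple lies in $f$.

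The only delicate step is the closure construction guaranteeing that $\mathcal B$ remains countable while being closed under all the $g_j^{-1}$ operations needed to "align" the $i$-th coordinate with the $j$-th coordinate in a rectangular way. The degenerate case $r=0$ forces $f=\omega_1$ and is handled by requiring $\omega_1\in\mathcal B$ from the outset. No further obstacle is anticipated, as the rest is a straightforward verification of the displayed set identity.
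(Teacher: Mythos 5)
Your proof is correct and is essentially the paper's argument in slightly more abstract form: the paper fixes an injective family $(x_\beta)_{\beta<\omega_1}$ of reals and uses the rational cuts $\{\beta<\omega_1 : q< x_\beta\}$ as its countable point-separating family, so its sets $\{\alpha<\omega_1 : q\geq x_{f(j,\alpha)}\}$ are exactly your preimages $g_j^{-1}[B]$, and its displayed countable intersection of unions of two rectangles is precisely your identity for $f$. The only cosmetic difference is that you close $\mathcal B$ under the operations $B\mapsto g_j^{-1}[B]$ iteratively, whereas a single application to the base separating family (together with complements and $\omega_1$) already suffices, since the backward inclusion only uses that the family separates points.
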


\begin{proof}
    Fix an injective family $(x_{\beta}: \beta \in \omega_1)$ of real numbers. Then for every $\alpha$, $f(\alpha) = \vec \beta = (\beta_0, \dots, \beta_{r-1})$ iff $\forall q \in Q\, \forall j <r$, $q<x_{f(j, \alpha)}\leftrightarrow q<x_{\beta_j}$. Thus:

    First consider the case where $i = 0$ to ease our notation, the general case can be verified by ``shifting the coordinates" on any $i$-function to obtain an $0$-function. Observe that $$g = \bigcap_{j<r}\bigcap_{q \in \mathbb{Q}}\pi_0^{-1}[\{\alpha < \omega_1 : q \geq x_{f(j,\alpha)}\}]\cap \pi_{j+1}^{-1}[\{\beta < \omega_1 : q < x_{\beta}\}]$$$$\cup \pi_0^{-1}[\{\alpha < \omega_1 : q \geq x_{f(j,\alpha)}\}]\cap \pi_{j+1}^{-1}[\{\beta < \omega_1 : q < x_{\beta}\}]$$ Taking $\mathcal B$ to be the collection of the sets used to write the  intersections above we only need to verify that $f = g$ to conclude the lemma.

    Indeed let $(\alpha, \beta_0, \dots, \beta_{r-1}) \in f$. For all $j<r$ and $q \in \mathbb Q$, since either $q < f(j,\alpha) = \beta_{j-1}$ or $f(j,\alpha) = \beta_{j-1} \leq q$, it is immediate that $(\alpha, \beta_0, \dots, \beta_{r-1}) \in g$. Now, if $(\alpha, \beta_0, \dots, \beta_{r-1}) \notin f$, there are $j<r$ and $q \in \mathbb Q$ such that $f(j,\alpha) < q < \beta_{j-1}$ or $f(j,\alpha) > q >\beta_{j-1}$. Hence $(\alpha, \beta_0, \dots, \beta_{r-1}) \notin g$.
\end{proof}

\begin{lemma}\label{lemma:ifunccounting}
    Given $r<\omega$. There exists $(f^{i}_n: n \in \omega, i\leq r)$ a collection of $i$-functions into $\omega_1^{r}$ such that $\omega_1^{r+1}=\bigcup_{i\leq r}\bigcup_{n \in \omega} f^{i}_n$.
\end{lemma}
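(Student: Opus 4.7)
The plan is to partition $\omega_1^{r+1}$ according to which coordinate is the smallest index attaining the maximum value, and then, on each piece, exploit the countability of $\alpha+1$ for $\alpha<\omega_1$ to enumerate the $r$-tuples all of whose entries are bounded by $\alpha$.

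Concretely, for each $i\leq r$ I would set
\[
S_i=\{(\beta_0,\ldots,\beta_r)\in \omega_1^{r+1}:\beta_j<\beta_i\text{ for all }j<i\text{ and }\beta_j\leq \beta_i\text{ for all }j>i\}.
\]
These sets are pairwise disjoint with union $\omega_1^{r+1}$, since every tuple has a well-defined maximum attained at a unique smallest index. For each $i\leq r$ and each $\alpha\in\omega_1$, every tuple in $S_i$ whose $i$-th coordinate equals $\alpha$ has all other coordinates in $\alpha+1$, so the set
\[
T_{i,\alpha}=\{(\beta_0,\ldots,\beta_{r-1})\in\omega_1^r:(\beta_0,\ldots,\beta_{i-1},\alpha,\beta_i,\ldots,\beta_{r-1})\in S_i\}
\]
has cardinality at most $|\alpha+1|^r\leq \omega$. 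Fix, for each $(i,\alpha)$, a surjection $e_{i,\alpha}:\omega\to T_{i,\alpha}\cup\{(0,\ldots,0)\}$; adjoining the constant tuple avoids emptiness when it occurs (e.g.\ $\alpha=0$, $i<r$).

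Now define
\[
f^i_n=\{(\beta_0,\ldots,\beta_{i-1},\alpha,\beta_i,\ldots,\beta_{r-1}):\alpha\in\omega_1\text{ and }(\beta_0,\ldots,\beta_{r-1})=e_{i,\alpha}(n)\}.
\]
Each $f^i_n$ is an $i$-function into $\omega_1^r$ by construction, since for every $\alpha$ exactly one tuple with $i$-th coordinate $\alpha$ is selected. For coverage, given any $(\gamma_0,\ldots,\gamma_r)\in\omega_1^{r+1}$, I would let $i$ be the smallest index achieving $\max_j\gamma_j$; then the tuple lies in $S_i$, so the corresponding $r$-tuple $(\gamma_j)_{j\neq i}$ is in $T_{i,\gamma_i}$ and therefore equals $e_{i,\gamma_i}(n)$ for some $n$, which places $(\gamma_0,\ldots,\gamma_r)\in f^i_n$.

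The main obstacle here is not mathematical depth but bookkeeping: one must keep straight how inserting and removing the $i$-th coordinate shifts the indexing between $r$-tuples in $\omega_1^r$ and $(r+1)$-tuples in $\omega_1^{r+1}$, consistently with the conventions of Definition \ref{def:ifunc}. Once these conventions are pinned down, the combinatorial content reduces to the standard fact that below any countable ordinal there are only countably many $r$-tuples.
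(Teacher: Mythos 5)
Your proof is correct and follows essentially the same route as the paper's: both hinge on the fact that a tuple's maximum coordinate bounds the rest, so the remaining $r$-tuple lies in the countable set $(\alpha+1)^r$, which can be enumerated as $\{f^i_n(\alpha):n\in\omega\}$. The only difference is that you restrict the enumeration to the subset $T_{i,\alpha}$ of tuples whose maximum is first attained at coordinate $i$ (padding to avoid emptiness), whereas the paper simply enumerates all of $(\alpha+1)^r$; your extra disjointness is harmless but unnecessary, since the paper achieves it later by passing to the sets $g^{ri}_n$.
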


\begin{proof}
    Define, for each $i < r$, $f^{i}_n(\alpha)$ such that the set $\{f^{i}_n(\alpha) : n \in \omega\} = (\alpha+1)^{r}$. The union of the functions is a subset of $\omega_{1}^{r+1}$ by Definition \ref{def:ifunc}. Now, given $x = (x_0, \dots, x_r) \in \omega_1^{r+1}$, let $j \leq r$ be such that $x_j \geq x_i$ for all $i \leq r$. Since $\bar x = (x_0, \dots, \hat{x_j}, \dots, x_r) \in (x_j + 1)^{r}$, we must have $n \in \omega$ such that $f^{j}_n(x_j) = \bar x$, therefore $x \in f^j_n$. 
\end{proof}
The following lemma is inspired by \cite{szpilrajn1938characteristic}.

\begin{lemma}\label{lemma:identifying}
If there exists an uncountable $\Delta$-set, then there exists a countable subalgebra $\mathcal A'$ of $\mathcal P(\omega_1)$ which separates points such that for every vanishing sequence $(F_n: n \in \omega)$, there exists a vanishing sequence $(G_n: n \in \omega)$ in $\mathcal A'_\sigma$ such that $F_n\subseteq G_n$.
\end{lemma}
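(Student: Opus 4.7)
The plan is to transport the $\Delta$-property of some fixed $\Delta$-set $X\subseteq 2^\omega$ onto $\omega_1$ via the characteristic-function encoding of Definition~\ref{def:chi}, mimicking Szpilrajn. Since every uncountable subspace of a $\Delta$-set is again a $\Delta$-set (restrict any vanishing sequence and intersect the open expansion with the subspace), we may assume our $\Delta$-set $X$ has cardinality exactly $\aleph_1$, fix an injective enumeration $X=\{x_\alpha:\alpha<\omega_1\}$, and set $A_n=\{\alpha<\omega_1:x_\alpha(n)=1\}$ for each $n\in\omega$. Let $\mathcal A'$ be the (countable) Boolean subalgebra of $\mathcal P(\omega_1)$ generated by $\mathcal A=(A_n:n\in\omega)$.

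With $\mathcal A$ so defined, the associated characteristic function of Definition~\ref{def:chi} satisfies $\chi_{\mathcal A}(\alpha)=x_\alpha$ for every $\alpha<\omega_1$, so $\chi_{\mathcal A}$ is injective; by Lemma~\ref{lemma:chiprop}, $\mathcal A$, and therefore $\mathcal A'$, separates points of $\omega_1$. The key observation for the vanishing-expansion property is that the bijection $\alpha\mapsto x_\alpha$ translates the algebra $\mathcal A'$ into the Boolean algebra of clopen subsets of $X$ generated by a basis of $2^\omega$: every basic clopen $\{f\in 2^\omega:f(n_1)=\varepsilon_1,\dots,f(n_k)=\varepsilon_k\}$ pulls back to a finite Boolean combination of the $A_{n_i}$, hence lies in $\mathcal A'$. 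Since $X\subseteq 2^\omega$ is second countable, every open subset of $X$ is a countable union of such basic clopens, and so its preimage under $\alpha\mapsto x_\alpha$ is a countable union of elements of $\mathcal A'$, i.e.\ an element of $\mathcal A'_\sigma$.

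Finally, given any vanishing sequence $(F_n:n\in\omega)$ of subsets of $\omega_1$, set $F'_n=\{x_\alpha:\alpha\in F_n\}\subseteq X$. As the enumeration is a bijection, $(F'_n)$ is vanishing in $X$, and by hypothesis it admits a vanishing open expansion $(U_n:n\in\omega)$ in $X$ with $F'_n\subseteq U_n$. Defining $G_n=\{\alpha<\omega_1:x_\alpha\in U_n\}$, the bijection preserves inclusions and arbitrary intersections, so $(G_n)$ is vanishing with $F_n\subseteq G_n$, and by the previous paragraph $G_n\in\mathcal A'_\sigma$, as required.

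The argument is essentially a translation exercise, and I expect no genuine obstacle; the only point needing care is to verify that the chosen algebra on $\omega_1$ is precisely the one induced by a clopen basis of $X$, so that ``open in $X$" translates into ``countable union of members of $\mathcal A'$'' (the $\sigma$ in $\mathcal A'_\sigma$) and not into something weaker like ``$G_\delta$ of complements,'' which is what makes second countability of $2^\omega$ the crucial ingredient.
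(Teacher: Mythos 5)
Your proposal is correct and takes essentially the same route as the paper: the paper's proof also passes to a subspace of size $\aleph_1$, identifies it with $\omega_1$, and takes $\mathcal A'$ to be the algebra generated by a countable basis, which is exactly your algebra generated by the pullbacks of the basic clopen sets. Your write-up merely spells out, via the characteristic-function encoding, the details the paper leaves as ``clearly.''
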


\begin{proof}
    Every subset of a $\Delta$-set is a $\Delta$-set, thus there exists a $\Delta$-set $X$ of size $\omega_1$. Clearly, by identifying $X$ with $\omega_1$, we may take $\mathcal A'$ to be the algebra generated by a countable basis of $X$.
\end{proof}

We are now ready to prove the main result of this section.

\begin{theorem}\label{thm:strongDelta}
    If there exists an uncountable $\Delta$-set, then there is an uncountable strong $\Delta$-set.
\end{theorem}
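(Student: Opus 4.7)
The plan is to realize the strong $\Delta$-set as the image of $\omega_1$ under the characteristic function of a carefully enlarged countable algebra, and then verify the $\Delta$-property of each finite power using the $i$-function machinery from Lemmas~\ref{lemma:ifunccounting} and \ref{lemma:ifuncdelta}.

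First I would invoke Lemma~\ref{lemma:identifying} to obtain a countable subalgebra $\mathcal A' \subseteq \mathcal P(\omega_1)$ which separates points and for which every vanishing sequence in $\omega_1$ admits an expansion in $\mathcal A'_\sigma$. For each $r \geq 2$, each $i \leq r-1$, and each $k \in \omega$, Lemma~\ref{lemma:ifuncdelta} applied to the $i$-function $f^i_k$ provided by Lemma~\ref{lemma:ifunccounting} produces a countable $\mathcal B^{r,i,k} \subseteq \mathcal P(\omega_1)$ with $f^i_k \in (\bigotimes \mathcal B^{r,i,k})_{u\delta}$. I enlarge $\mathcal A'$ (keeping it countable) so that it contains $\bigcup_{r,i,k} \mathcal B^{r,i,k}$; since enlarging the algebra only enlarges $\mathcal A'_\sigma$, the $\Delta$-expansion property is preserved. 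By Lemma~\ref{lemma:chiprop}, the image $Y := \chi_{\mathcal A'}[\omega_1] \subseteq 2^\omega$ is an uncountable subspace whose topology inherited from $2^\omega$ corresponds exactly to the $\mathcal A'$-topology on $\omega_1$; this is my candidate strong $\Delta$-set.

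It then remains to show that $Y^r$ is a $\Delta$-set for every $r \geq 1$. The case $r = 1$ holds by the choice of $\mathcal A'$. For $r \geq 2$, I fix a vanishing sequence $(F_n)_n$ in $\omega_1^r$. By Lemma~\ref{lemma:ifunccounting} with parameter $r-1$ there is a countable cover $(f_s)_{s \in \omega}$ of $\omega_1^r$ by $i(s)$-functions, with the critical point-finiteness property that every $\vec x \in \omega_1^r$ lies in at most $r$ of the $f_s$'s. Since $\pi_{i(s)}$ is a bijection from $f_s$ onto $\omega_1$, the projection $\tilde F^s_n := \pi_{i(s)}[F_n \cap f_s]$ is vanishing in $\omega_1$ and admits an open expansion $G^s_n \in \mathcal A'_\sigma$ with $\bigcap_n G^s_n = \emptyset$. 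By the inclusion $\mathcal B^{r,i(s),k(s)} \subseteq \mathcal A'$ and Lemma~\ref{lemma:ifuncdelta}, I write $f_s = \bigcap_m H^s_m$ with $H^s_m \in (\bigotimes \mathcal A')_u$ clopen and decreasing in $m$. I set $U_n := \bigcup_s \pi_{i(s)}^{-1}[G^s_n] \cap H^s_n$, which is open in the product topology on $Y^r$; the containment $F_n \subseteq U_n$ is immediate, since any $\vec x \in F_n$ lies in some $f_{s^*}$, giving $\vec x \in H^{s^*}_n$ and $\pi_{i(s^*)}(\vec x) \in G^{s^*}_n$.

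The main technical obstacle is verifying $\bigcap_n U_n = \emptyset$. If $\vec x$ lies in every $U_n$ via witnesses $s(n)$, the finite set $A_{\vec x} := \{s : \vec x \in f_s\}$ of size at most $r$ combined with the vanishing of $(G^s_n)_n$ for $s \in A_{\vec x}$ and the vanishing of $(H^s_n)_n$ for $s \notin A_{\vec x}$ should, via pigeonhole on $A_{\vec x}$, deliver the desired contradiction. The subtlety I anticipate is that when $\vec x$ is a limit point of $\bigcup_{s \notin A_{\vec x}} f_s$, the bounds $m(s) := \min\{m : \vec x \notin H^s_m\}$ can be unbounded over $s$, so that the per-$s$ finiteness of $\{n : s(n) = s\}$ does not immediately force $\{n : s(n) \in A_{\vec x}\}$ to be infinite. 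My plan for overcoming this is to replace $U_n$ by its $G_\delta$-strengthening $\tilde U_n := \bigcap_m \bigcup_s \pi_{i(s)}^{-1}[G^s_n] \cap H^s_m$ and exploit the explicit rational-separator form of the $H^s_m$ coming from the proof of Lemma~\ref{lemma:ifuncdelta} to force the chosen witness $s(n,m)$ to lie in $A_{\vec x}$ once $m$ exceeds the finitely many rationals needed to separate $\vec x$'s coordinates from the relevant $f_s$; the resulting vanishing $G_\delta$-expansion then converts to the required open vanishing expansion by a standard diagonal argument.
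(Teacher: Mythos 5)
Your overall strategy coincides with the paper's: build a countable algebra $\mathcal A$ containing the separating algebra from Lemma~\ref{lemma:identifying} together with the families from Lemma~\ref{lemma:ifuncdelta} coding the $i$-functions of Lemma~\ref{lemma:ifunccounting}, take $X=\chi_{\mathcal A}[\omega_1]$, and expand a vanishing sequence in $X^{r}$ fibrewise along the $i$-functions. The point where your argument breaks, however, is exactly the one you flag yourself: proving that $\bigcap_n U_n=\emptyset$. Because you keep the \emph{overlapping} cover $(f_s)_s$, a point $\vec x$ lying in every $U_n$ may be witnessed by a different $s(n)$ for each $n$, and since there are infinitely many $s\notin A_{\vec x}$ with $\vec x\in H^s_m$ for arbitrarily large $m$ (the bounds $m(s)$ are indeed unbounded over $s$ in general), neither the pigeonhole on $A_{\vec x}$ nor the proposed $G_\delta$-strengthening $\tilde U_n=\bigcap_m\bigcup_s\pi_{i(s)}^{-1}[G^s_n]\cap H^s_m$ forces the witness into $A_{\vec x}$: for each fixed $m$ there can still be some $s\notin A_{\vec x}$ with $\vec x\in H^s_m$, so the intersection over $m$ does not localize the witness. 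Your appeal to ``the explicit rational-separator form'' is not carried out and does not obviously repair this, since the number of rationals needed to separate $\vec x$ from $f_s$ again depends on $s$.

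The paper avoids this entirely by \emph{disjointifying} the cover before doing anything else: it replaces the $f^{ri}_n$ by $g^{ri}_n=f^{ri}_n\setminus\bigl(\bigcup_{n'<n}\bigcup_{i'\leq r}f^{ri'}_{n'}\cup\bigcup_{i'<i}f^{ri'}_n\bigr)$, so that $(g^{ri}_n)$ is a genuine partition of $\omega_1^{r+1}$. Then each point lies in exactly one piece, the witness index in the expansion $W_m=\bigcup_{i,n}\pi_i^{-1}[G^{ni}_m]\cap g^{ri}_n$ is the same for every $m$, and the vanishing of $(W_m)_m$ follows in one line from the vanishing of the corresponding $(G^{ni}_m)_m$. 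The price is that $\sigma_r[g^{ri}_n]$ is no longer closed, only simultaneously $F_\sigma$ and $G_\delta$ (an intersection of a closed set with an open set in the image); this is still enough to make $\sigma_r[W_m]$ a $G_\delta$ set, and $G_\delta$ expansions are equivalent to open expansions for the $\Delta$-set property. I recommend you adopt this disjointification: it replaces the genuinely problematic limit-point analysis in your proposal by a trivial uniqueness argument, at the modest cost of tracking that the pieces are both $F_\sigma$ and $G_\delta$ rather than clopen.
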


\begin{proof}

Since there exists an uncountable $\Delta$-set, there exists a countable set $\mathcal A'$ of subsets of $\omega_1$ as in Lemma \ref{lemma:identifying}.

For each $r<\omega$ and $i\leq r$ let $(f^{ri}_n: n \in \omega)$ be a collection of $i$-functions of $\omega_1^{r+1}$ such that $\omega_1^{r+1}=\bigcup_{i\leq r}\bigcup_{n \in \omega} f^{r, i}_n$ as seen in Lemma \ref{lemma:ifunccounting}.

For each $r<\omega$, let $g_{n}^{ri} = f_{n}^{ri} \setminus\left( \bigcup_{n'<n}\bigcup_{i'\leq r}f_{n'}^{ri'}\cup \bigcup_{i'<i}f_n^{ri'}\right)$, a partial $i$-function. By the Lemma \ref{lemma:ifuncdelta}, for each $n, r<\omega$ and $i\leq r$ there exists a countable collection $\mathcal B_{n}^{ri}$ of subsets of $\omega_1$ such that $f_n^{ri} \in (\mathcal B_{n}^{ri}\otimes\mathcal B_{n}^{ri})_{u\delta}$. Let $\mathcal A$ be the countable algebra generated by the countable collection $\bigcup_{r \in \omega}\bigcup_{i\leq r}\bigcup_{n \in \omega} \mathcal B_{n}^{ri}\cup \mathcal A'$, Then, for every $n, r \in \omega$, and $i\leq r$, $g_n^{ri}$ are intersections of a member of $(\bigotimes_{i\leq r}\mathcal A)_{u\delta}$ and a member of $(\mathcal [\bigotimes_{i\leq r} \mathcal A]^c)_{i\sigma i}$.%desenvolver ou só falar que é pela cara do g.

Consider $\vec{\mathcal A} = (A_n : n \in \omega)$ to be an enumeration of  $\mathcal A$. Let $\sigma = \chi_{\vec{\mathcal A}} : \omega_1 \rightarrow 2^{\omega}$ as in Definition \ref{def:chi} and $X = \sigma[\omega_1]$. For each $r \in \omega$, let $\sigma_r : \omega_1^{r+1} \rightarrow (2^{\omega})^{r+1}$ be given by $\sigma_r(\alpha_0, \dots, \alpha_r) = (\sigma(\alpha_0), \dots, \sigma(\alpha_r))$. Let $X=\sigma[\omega_1]$.\vspace{1em}
 
 \textbf{Claim:} For every $r,n \in \omega$ and $i\leq n$, $\sigma_r[g_n^{ri}]$ is an $F_{\sigma}$ of $X^{r+1}$.
 \begin{proof}
     Fix $n$. We prove that $\sigma_r[g_n^{ri}]$ is a $G_\delta^{ri}$ and an $F_\sigma$.

     Write $g_n^{ri} = Q_0 \cap Q_1$, where $Q_0 \in (\bigotimes_{i\leq r}\mathcal A)_{u\delta}$ and $Q_1=(\mathcal [\bigotimes_{i\leq r} \mathcal A]^c)_{i\sigma i}$.

    It suffices to show that $\sigma_r[Q_0]$ is closed and that $\sigma_r[Q_1]$ is open, since we are working with subspaces of $(2^{\omega})^r$.

     \textbf{$\sigma_r[Q_0]$ is closed:} write $Q_0=\bigcap_{l \in \omega}\bigcup_{j<l}\prod_{i'\leq r}A_{i'j l}$, where each $A_{i'j l}$ is in $\mathcal A$. Then $\sigma_r [Q_0]=\bigcap_{l \in \omega}\bigcup_{j<l}\prod_{i'\leq r}\sigma[A_{i'j l}]$. By Lemma \ref{lemma:chiprop}, each $\sigma[A_{i'j l}]$ is a clopen subset of $X$, so this is a closed set.

     \textbf{$\sigma_r[Q_1]$ is open:} write $Q_1=\bigcap_{k<p}\bigcup_{l \in \omega}\omega_1^{r+1}\setminus\bigcap_{j<l}\prod_{i'\leq r} A^k_{i' j l}$, where each $A^k_{i' j l}$ is in $\mathcal A$. Then $\sigma_r [Q_1]=\bigcap_{i<p}\bigcup_{l \in \omega}\bigcap_{j<l}X^{r+1}\setminus\prod_{i'\leq r} \sigma[A^k_{i' j l}]$. Again each $\sigma[A^k_{i' j l}]$ is a clopen subset of $X$, so this is an open set.
 \end{proof}

 Fix $r<\omega$. We show that $X^{r+1}$ is a $\Delta$-set. Let $(K_m: m \in \omega)$ be a vanishing sequence in $X^{r+1}$, and $F_m = \sigma_r^{-1}[K_m]$. For every $m, n \in \omega$, let $X^{ni}_{m} = \pi_i[g_n^{ri}\cap F_m]$. Now we have: 
 \begin{itemize}
     \item $g_n^{ri} \cap F_m = \pi_i^{-1}[X^{ni}_{m}] \cap g_n^{ri}$;

     \item For each $n \in \omega$ and $i\leq n$, $(X^{ni}_{m} : m \in \omega)$ vanishes;
     \item For each $n \in \omega$ and $i\leq n$, $\exists (G^{ni}_{m} : m \in \omega)$ vanishing expansion of $(X^{ni}_{m} : n \in \omega)$ in $\mathcal A_{\sigma}$.

    Then, for each $m \in \omega$:

    $$F_m=\bigcup_{i\leq r}\bigcup_{n \in \omega}F_m\cap g^{ri}_n=\bigcup_{i\leq r}\bigcup_{n \in \omega}\pi_i^{-1}[X^{ni}_{m}] \cap g_n^{ri}$$
    $$\subseteq\bigcup_{i\leq r}\bigcup_{n \in \omega}\pi_i^{-1}[G^{ni}_{m}] \cap g_n^{ri}\stackrel{\text{def}}=W_m$$
 \end{itemize}

 \textbf{Claim:} $(W_m: m \in \omega)$ is vanishing.

 \begin{proof}
     Assume by contradiction it is not. Fix $\vec \alpha \in \bigcap_{m \in \omega}W_m$. There exists a unique member of $(g_n^{ri}: n \in \omega,i\leq r)$ such that $\vec \alpha$ is inside of it. Then $\vec \alpha \in \bigcap_{m \in \omega}\pi_i^{-1}[G^{ni}_{m}] \cap g_n^{ri}$. Thus, $\vec\alpha(i) \in \bigcap_{m \in \omega} G_m^k=\emptyset$, a contradiction.
 \end{proof}

 Thus:

 $$K_m\subseteq \sigma_r[W_m]=\bigcup_{i\leq r}\bigcup_{n \in \omega} \pi_i^{-1}[\sigma[G^{ni}_m]]\cap  \sigma_r[g_n^{ri}].$$

 The sequence $\sigma_r[W_m]$ is vanishing.  As $(g_n^{ri}: n \in \omega, i\leq r)$ is a partition of $\omega_1^{r+1}$, its complement is:

 $$\bigcup_{i\leq r}\bigcup_{n \in \omega} \sigma_r[g_n^{ri}]\setminus \pi^{-1}_i[\sigma[G^{ni}_m]],$$

 which is clearly an $F_\sigma$ as each $\sigma[G_m^{ni}]$ is open for each $n<\omega$, $i\leq r$.
\end{proof}

Since this property is shared with $Q$-sets, we find the following question interesting:

\begin{question}
    Does the existence of a $\Delta$-set imply the existence of a $Q$-set?
\end{question}

\section{Concluding Remarks}

% agradecer o michael
% fazer as perguntas que a gente não conseguiu responder
%

To conclude this paper we would like to put some of our remaining thoughts and leftover questions. In Section \ref{sec:forcing} we constructed an example of a $Q$-set whose square is not a $\Delta$-set. A very natural question would be if we could lift the powers in this construction.

\begin{question}\label{que:isnotpower}
    Is there a consistent example of a $Q/\Delta$-set $X$ and $n \geq 2$ such that $X^n$ is a $Q/\Delta$-set but $X^{n+1}$ is not?
\end{question}

Some natural attempts at adapting the forcing from Section \ref{sec:forcing} to increase the dimensions of the objects involved and then mimicking the proof of Theorem \ref{thm:mainforcing} seem to fail. Thus, stepping up Theorem \ref{thm:mainforcing} to higher dimensions seems to be a non-trivial problem. This segues naturally to the next question which is also tied to Section \ref{sec:strongdelta}.

\begin{question}\label{que:ispower}
    If $X$ is a $(Q/\Delta)$-set such that $X^2$ is also a $(Q/\Delta)$-set, then must $X$ be strongly $(Q/\Delta)$?
\end{question}

In this sense Theorem \ref{thm:strongDelta} guarantees that the existence of a strongly $\Delta$-set depends only on the existence of a $\Delta$-set. This may be a promising lead on whether the previous question has a positive answer. 
\section*{Acknowledgements}

We would like to thank M. Hru\v{s}ák for suggesting the model used in Section \ref{sec:forcing}, and P. Szeptycki for his comments and suggestions.

The first author is currently an unpaid researcher with no academic affiliation. He thanks the financial support of his family, in particular of his grandmother Leonor Rey and his mother Maria Inez Rey.

The second author is a voluntary (unpaid) postdoctoral researcher at the University of São Paulo. He thanks the University for the hospitality. He thanks the financial support of his parents Alberto Rodrigues, Tânia de Oliveira and his wife Bruna Nagano.

%\bibliographystyle{plain}
%\bibliography{bibl}

\begin{thebibliography}{10}

\bibitem{brendle1999dow}
J.~Brendle.
\newblock Dow’s principle and {$Q$}-sets.
\newblock {\em Canadian Math. Bull.}, 42(1):13--24, 1999.

\bibitem{BrendleQ}
J.~Brendle.
\newblock Q.
\newblock In {\em Sets and Computations}, volume~33, pages 1--10. World Scientific, 2017.

\bibitem{fleissner1983squares}
W.~Fleissner.
\newblock Squares of {$Q$} sets.
\newblock {\em Fund. Math.}, 3(118):223--231, 1983.

\bibitem{fleissner1980}
W.~G. Fleissner and A.~W. Miller.
\newblock On {$Q$} sets.
\newblock {\em Proc. Amer. Math. Soc.}, 78(2):280--284, 1980.

\bibitem{HausdorffQ}
F.~Hausdorff.
\newblock Problème 58.
\newblock {\em Fund. Math.}, 20:285--286, 1933.

\bibitem{hernandez2005q}
F.~Hern{\'a}ndez-Hern{\'a}ndez and M.~Hru{\v{s}}{\'a}k.
\newblock ${Q}$-sets and normality of ${\Psi}$-spaces.
\newblock {\em Topology Proc.}, 29(1):155--165, 2005.

\bibitem{fakeKnight}
R.~W. Knight.
\newblock ${\Delta}$-sets.
\newblock {\em Trans. Amer. Math. Soc.}, 339(1):45--60, 1993.

\bibitem{leiderman2023delta}
A.~Leiderman and P.~Szeptycki.
\newblock On {$\Delta$}-spaces.
\newblock {\em arXiv preprint arXiv:2307.16047}, 2023.

\bibitem{miller1984special}
A.~W. Miller.
\newblock Special subsets of the real line.
\newblock In {\em Handbook of Set-Theoretic Topology}, pages 201--233. Elsevier, 1984.

\bibitem{miller2006hodgepodge}
A.~W. Miller.
\newblock A hodgepodge of sets of reals.
\newblock {\em Note Mat.}, 27:25--39, 2007.

\bibitem{przymusinski1977normality}
T.~C. Przymusi{\'n}ski.
\newblock Normality and separability of {M}oore spaces.
\newblock In {\em Set-theoretic Topology}, pages 325--337. Elsevier, 1977.

\bibitem{przymusinski1980existence}
T.~C. Przymusi{\'n}ski.
\newblock The existence of {$Q$}-sets is equivalent to the existence of strong {$Q$}-sets.
\newblock {\em Proc. Amer. Math. Soc.}, 79(4):626--628, 1980.

\bibitem{reed1990set}
G.~M. Reed.
\newblock Set-theoretic problems in {M}oore spaces.
\newblock {\em Open Problems in Topology (North-Holland, Amsterdam, 1990)}, pages 132--182, 1990.

\bibitem{RothbergerQ}
F.~Rothberger.
\newblock On some problems of {H}ausdorff and {S}ierpinski.
\newblock {\em Fund. Math.}, 35:29--46, 1948.

\bibitem{SierpinskiQ}
W.~Sierpinski.
\newblock Sur une problème de {M}. {H}ausdorff.
\newblock {\em Fund. Math.}, 30:1--7, 1938.

\bibitem{szpilrajn1938characteristic}
E.~Szpilrajn.
\newblock The characteristic function of a sequence of sets and some of its applications.
\newblock {\em Fund. Math.}, 31(1):47--66, 1938.

\end{thebibliography}

\end{document}